\documentclass[a4paper,fleqn,11pt]{amsart}
\usepackage{natbib}
\usepackage{hyperref}
\hypersetup{
	pdftitle={L\'evy Driven Polling Systems},
	pdfsubject={Probability Theory},
	pdfauthor={Kamil Kosi\'nski},
	pdfdisplaydoctitle=true,
}

%Michel's settings
%--------------------------------------------------------
\textwidth 15.05cm \textheight 22.1cm \evensidemargin 0cm
\oddsidemargin 0cm \topmargin 0cm
\setlength{\parindent}{0pt}

%--------------------------------------------------------

%Kamil's commands
%--------------------------------------------------------

\newcommand{\ee}{\mathbb E}

\newcommand{\rr}{\mathbb R}

\newcommand{\cf}{\mathcal F}

\newcommand{\cl}{\mathcal L}
\newcommand{\cs}{\mathcal S}

\newcommand{\ba}{\boldsymbol a}
\newcommand{\be}{\boldsymbol e}
\newcommand{\bm}{\boldsymbol m}

\newcommand{\bu}{\boldsymbol u}
\newcommand{\bv}{\boldsymbol v}
\newcommand{\bw}{\boldsymbol w}
\newcommand{\bx}{\boldsymbol x}

\newcommand{\0}{\boldsymbol 0}
\newcommand{\1}{\boldsymbol 1}

\newcommand{\bA}{\boldsymbol{A}}
\newcommand{\bE}{\boldsymbol E}
\newcommand{\bF}{\boldsymbol F}
\newcommand{\bG}{\boldsymbol G}
\newcommand{\bL}{\boldsymbol L}
\newcommand{\bR}{\boldsymbol R}

\newcommand{\bH}{\boldsymbol H}
\newcommand{\bW}{\boldsymbol W}
\newcommand{\bX}{\boldsymbol{X}}
\newcommand{\bZ}{\boldsymbol B}

\newcommand{\bvarphi}{\boldsymbol\kappa}

\newcommand{\tG}{\tilde G}
\newcommand{\tS}{\tilde S}

\newcommand{\tX}{\tilde X}

\newcommand{\de}{=_{\rm d}}

%--------------------------------------------------------

% Theorem like environments
%--------------------------------------------------------
\newtheorem{theorem}{Theorem}
\newtheorem{lem}{Lemma}

\newtheorem{cor}{Corollary}

\newtheorem{prop}{Proposition}
\newtheorem{prp}{Property}
\newtheorem{assumption}{Assumption}
\theoremstyle{remark}
\newtheorem{rem}{Remark}

%--------------------------------------------------------

\renewenvironment{proof}[1][\proofname]{\par \normalfont \trivlist
 \item[\hskip\labelsep\itshape #1]\ignorespaces
}{%
 \hspace*{\fill}$\Box$ \endtrivlist
}
\renewcommand{\proofname}{\noindent {\bf Proof}}

\begin{document}
\title[L\'{e}vy-driven polling systems]{L\'{e}vy-driven polling systems and continuous-state branching processes}
\bibliographystyle{plainnat}
\setcitestyle{numbers}

\author{O.J.\ Boxma}
\address{E{\sc urandom} and Department of Mathematics and Computer Science,
Eindhoven University of Technology, the Netherlands}
\curraddr{}
\email{boxma@win.tue.nl}
\thanks{}

\author{J.\ Ivanovs}
\address{E{\sc urandom}, Eindhoven University of Technology, the Netherlands; Korteweg-de Vries
Institute for Mathematics, University of Amsterdam, the
Netherlands} \curraddr{} \email{ivanovs@eurandom.tue.nl}
\thanks{}

\author{K.M.\ Kosi\'nski}
\address{E{\sc urandom}, Eindhoven University of Technology, the Netherlands; Korteweg-de Vries
Institute for Mathematics, University of Amsterdam, the
Netherlands} \curraddr{} \email{kosinski@eurandom.tue.nl}
\thanks{The third author was supported by NWO grant 613.000.701.}

\author{M.\ Mandjes}
\address{Korteweg-de Vries Institute for Mathematics,
University of Amsterdam, the Netherlands;
E{\sc urandom}, Eindhoven University of Technology, the Netherlands; CWI, Amsterdam, the Netherlands}
\curraddr{}
\email{m.r.h.mandjes@uva.nl}
\thanks{}

\date{December 30, 2013}

\keywords{polling system; L\'evy processes; branching processes}
\subjclass[2010]{Primary: 60K25; Secondary: 90B22}

\begin{abstract}
In this paper we consider a ring of $N\ge 1$ queues served by a
single server in a cyclic order. After having served a queue
(according to a service discipline that may vary from queue to
queue), there is a switch-over period and then the server serves
the next queue and so forth. This model is known in the literature
as a \textit{polling model}.
\par Each of the queues is fed by a non-decreasing L\'evy process, which can
be different during each of the consecutive periods within the
server's cycle. The $N$-dimensional L\'evy processes obtained in
this fashion are described by their (joint) Laplace exponent, thus
allowing for non-independent input streams. For such a system we
derive the steady-state distribution of the joint workload at
embedded epochs, i.e. polling and switching instants. Using the Kella-Whitt martingale, 
we also derive the steady-state distribution at an arbitrary epoch.
\par
Our analysis heavily relies on establishing a link between fluid
(L\'evy input) polling systems and multi-type Ji\v{r}ina processes
(continuous-state discrete-time branching processes). This is done
by properly defining the notion of the \textit{branching property}
for a discipline, which can be traced back to Fuhrmann and Resing.
This definition is broad enough to contain the most important
service disciplines, like exhaustive and gated.
\end{abstract}

\maketitle

\section{Introduction}
\label{INTRO}
Consider a queueing model consisting of multiple queues attended
by a single server, visiting the queues one at a time in a cyclic
order. Moving from one queue to another, the server incurs a
non-negligible switch-over time. Such single-server multiple-queue
models are commonly referred to as {\it polling models}.
Stimulated by a wide variety of applications, polling models have
been extensively studied in the literature, see
\cite{Takagi2,Takagi5,Wisnia} for a series of comprehensive
surveys and \cite{Levy2,Takagi3} for extensive overviews of the
applicability of polling models.

Throughout the vast polling literature, it is almost always
assumed that customers arrive at the queues according to
independent Poisson processes, where, in addition, the service
requirements brought along by these customers are i.i.d.\
sequences; the resulting input processes in the queues thus
constitute independent compound Poisson processes (CPPs).
Correlated arrivals in polling models have received little
attention; see \citet{Levy1} for a treatment of
polling models with correlated CPP input. Classical analysis of
polling systems heavily focuses on keeping track of customers in the
system at embedded epochs, i.e., instants of specific changes in
the system, like {\it polling instants} or {\it switching
instants}. 

A key feature of polling models is the {\it service
discipline}. A service discipline specifies the rule that
determines how long a server will visit a queue (and process any
workload found there). The most important and well known
disciplines include the {\it exhaustive} discipline, {\it gated}
discipline and {\it 1-limited} discipline. Under the exhaustive
discipline, the server will stay at the queue until this queue has
become empty. Under the gated discipline, the server serves
exactly the customers (or: the amount of work) present upon the
beginning of the visit. Under the 1-limited discipline, the server
serves only one customer -- if there is one. The `system's service
discipline' can be any `mixture' of the individual disciplines;
for instance: some of the queues are served according to a gated
discipline, whereas others are served exhaustively.

In \citet{Resing} (see also \citet{Fuhrmann2}) it is
shown that for a large class of classical polling models,
including those with exhaustive and gated service at all queues
(but not 1-limited), the evolution of the system at successive
polling instants at a fixed queue can be described as a multi-type
{\em branching process} (MTBP) with immigration. Models that
satisfy this MTBP-structure allow for an exact analysis, whereas
models that violate the MTBP-structure are considerably more
intricate, and therefore usually intractable. It turns out that it
is exactly the nature of the service disciplines which determines
the MTBP structure of the system. The structure is preserved if
each service discipline satisfies a special property called the
{\it branching property}, see e.g. Property 1 of \cite{Resing}.
The exhaustive and gated disciplines do satisfy this property
whereas the 1-limited does not. A key result for polling models
with the MTBP structure is the joint steady-state distribution of
the queue length (i.e., in terms of the number of customers) at
polling or switching instants of a particular queue.

In this paper we generalize the classical assumptions in several
ways. We consider polling models with L\'evy-driven, possibly
correlated, input streams. More specifically, we assume that the
input process $\bW$ is an $N$-dimensional L\'evy subordinator,
where $N\ge 1$ corresponds to the number of queues, and where
`subordinator' means that the corresponding sample paths are
non-decreasing (in all $N$ coordinates). We refer to this model as
a {\it L\'evy-driven polling model} with input process $\bW$. If
the queue under consideration, say queue $i$, is not in service,
its workload evolves according to the subordinator $W_i(t)$,
whereas during its service time it is described by $W_i(t)$ minus
drift $t$. It is important to note that, in fact, this paper
considers a slightly richer class of models, in which the workload
level while in service behaves as a spectrally positive L\'evy
process $A_i$ with negative drift (that decreases on average); 
here `spectrally positive' means that the underlying process has 
positive jumps only. We
remark that the class of spectrally positive L\'evy processes with
negative drift is used frequently in the theory of {\it storage
processes} to model the storage level (workload) of queues, dams
or fluid models, see e.g. \citet{kyprianou}, and \citet{Prabhu}
for an early reference.

We recall that L\'evy processes are processes with stationary,
independent increments;  it is stressed, however, that the
components of the $N$-dimensional L\'evy process need not
necessarily be independent. The class of L\'evy processes is rich
and covers Brownian motion, linear increment processes and CPPs as
special cases. The generalization from CPPs to L\'evy input
implies that we can no longer speak of notions such as customers
and queue lengths; this explains why we focus on the (joint) {\em
workload} process. While quite a few studies have been devoted to
a single server single queue model with L\'evy input (see, e.g.,
\citet[Chapter 4]{Prabhu} and \citet[Chapter 14]{Asmussen}), 
there is hardly any literature on L\'evy-driven
polling systems. An exception is \citet{Eliazar}, who
considers such systems only for the gated discipline, independent input processes and does not allow for
spectrally positive L\'evy processes. His analysis follows a dynamical-systems approach:
a stochastic Poincar\'e map, governing the one-cycle
dynamics of the polling system is introduced, and its statistical characteristics are studied. 
This approach differs from ours; we identify a branching structure in L\'evy-driven polling models
as will be explained later.
By considering the input as an $N$-dimensional L\'evy process $\bW$ instead of $N$
one-dimensional processes $W_i$, we accomplish an easy
incorporation of {\it correlation} between the inputs to different
queues. This is due to the fact that every L\'evy process is
uniquely characterized by its characteristic exponent, which in
the multidimensional case also captures the correlation structure
between the individual components.

Considering polling models with L\'evy input opens several new
perspectives. Firstly, the theory of L\'evy processes was strongly
developed in recent years, and its application appears to lead to
more simplified derivations of many results which, for the case of
compound Poisson input, are only obtained after detailed
calculations. Secondly, having L\'evy input leads to significant
generalizations of known results. Such generalizations are
theoretically interesting, but also, owing to the inherent
flexibility of L\'evy processes, offer various new possibilities from the viewpoint of
applications.
Polling models have found applications in many different areas,
like (i) Maintenance (a patrolling repairman); (ii)
Stochastic Economic Lotsizing (a machine producing products of various types upon demand);
(iii) road traffic (traffic lights at signalized intersections);
and (iv) protocols in computer and communication networks
(Bluetooth;
token ring protocols; protocols for web servers and routers).
Almost invariably, it has been assumed in the polling literature that the input process is composed
of a number of independent compound Poisson processes.
We allow L\'evy input, and correlation between the various input streams, and
different input processes during different visit and switch-over periods.
This gives much additional modelling capability.
E.g., in stochastic economic lotsizing  it is quite natural to have correlations between
the arrival processes of demands of different product types.
And in road traffic as well as in communications,
it is sometimes better to model traffic as a fluid than as separate customers;
indeed, a special case of our model is the
situation in which there is a constant fluid input in one queue of
the polling model, and a compound Poisson input in an other queue.
As another example, while a Brownian motion component may
not be natural in representing work inflow, it may represent
realistic fluctuations in the speed of the server. Recall that a
served queue is modelled by a spectrally positive L\'evy process
with negative drift allowing for incorporation of a Brownian
component.

The transition from CPPs to L\'evy subordinators deprives us from
the possibility of using the branching property from
\citet{Resing}, which is stated in terms of customers (which are of
a discrete nature) in the system, and therefore has no simple
translation to our continuous state-space setting. In our paper we
identify the analogous property, also referred to as the branching
property, for the disciplines in the L\'evy framework, that
enables to identify a branching structure in our system. This
allows us to mimic Resing's approach, and to describe the
multidimensional workload in the system at successive polling
instants at a fixed queue as a multi-type continuous state-space
(discrete time) branching process. This branching process is
referred to in the sequel as multi-type Ji\v{r}ina branching
process (MTJBP) due to \citet{Jirina}, who introduced
the notion of continuous state-space branching processes and paid
special attention to discrete-time processes (called {\it
Ji\v{r}ina processes} in the literature). The relation between
L\'evy-driven polling models and continuous state-space branching processes 
has been observed before by \citet{altman} in a special case
strongly relying on the assumption imposed that all the
queues are fed by {\em identical} L\'{e}vy subordinators. 
This relation was only used to derive the first two
waiting-time moments and did not focus on the underlying structure of the branching
process. The observation that L\'evy-driven polling models can be completely
characterized (in terms of distribution) by a continuous-state space branching process
of the MTJBP form is therefore novel.

Another performance measure which is analyzed in the paper, again
for disciplines satisfying our new branching property, is the
Laplace-Stieltjes Transform (LST) of the steady-state distribution
of the joint workload in the queues at an {\em arbitrary} epoch.
The classical polling literature focuses strongly on joint queue
lengths at polling epochs, and contains results for {\em marginal}
queue lengths and workloads at arbitrary epochs, but we are not
aware of any general results for {\em joint} queue-length or
workload distributions at arbitrary epochs -- with the exception
of the recent paper by \citet{Czerniak} that
considers constant fluid input at all queues for a very special
case of our model. We employ the Kella-Whitt martingale \cite{Kella}
to obtain this result. A similar approach
has been used before; e.g., \citet{Boxma}
give the steady-state storage level transform for a L\'evy-driven
queueing model with service vacations.

\subsection*{Contribution}

This paper casts a broad class of queueing models into a single
general framework. More specifically the contributions are the
following. First, we consider general, L\'evy-driven polling
models instead of the classical models with CPP inflow. Second, we
let the input $\bW$ change at polling and switching instants,
whereas in classical polling models the input processes are
typically fixed once and for all. Third, we allow for correlation
between the individual input processes (correlated arrivals received little attention so far,
see \cite{Levy1}). Fourth, we introduce a new
class of service disciplines satisfying a novel branching
property, and we relate L\'evy-driven polling models to MTJBP.
This class is broad and contains the well known exhaustive and
gated disciplines. Fifth, we provide the LST of the joint
steady-state workload distribution at an arbitrary epoch, which is
a new result even for classical polling models. Finally, we show
that the stability of our system does not depend on the
disciplines used at different queues, and can be formulated in
terms of rates of input (which leads to an intuitively appealing
criterion).

\subsection*{Organization of the paper}

The remainder of this paper is organized as follows. In
\autoref{Model description} we describe the model and the service
disciplines that are considered in this paper. \autoref{MTJBP}
presents a brief introduction on MTJBPs, and some additional
intuition behind these processes. We also state a limit theorem
for MTJBPs with immigration. \autoref{Polling and Jirina}
contains one of the two main theorems in this paper. It is shown
that in our model the workload level at different queues at
successive epochs that the server reaches a fixed queue is an
MTJBP with immigration. This leads to an expression for the LST of
the stationary joint workload distribution at different queues at
these epochs. \autoref{Steady-state distribution} contains the
second main theorem of this paper. We derive the LST of the
stationary joint workload distribution at an arbitrary epoch.
In \autoref{VIP} we show that our  results carry over in a
straightforward way to the situation in which $\bW$ changes
between polling and switching instants. In 
\autoref{Ergodicity} we present a discussion of the ergodicity of the
most general model, i.e., the model addressed in 
\autoref{VIP}. We conclude in \autoref{DACR} by suggesting
possible further generalizations.

\subsection*{Notation}

In the sequel, for any random variable $X$ we denote its LST by
$\tX$, i.e., $\tX(u)=\ee e^{-uX}$ for $u\in\rr$ such that $\tX$
exists. Throughout this paper, we will only use {\it spectrally
positive} L\'evy processes, that is, processes which are allowed
to have positive jumps only (and therefore containing the class of
all aforementioned L\'evy subordinators). Recall from \autoref{INTRO}
that the class of spectrally positive L\'evy processes is used frequently in the theory of storage
processes, see, e.g., \cite{kyprianou, Prabhu}.
For a background on
L\'evy processes see e.g.\ \cite{kyprianou,Sato}. Such a process
$\{L(t),t\ge 0\}$ can be uniquely characterized by its Laplace
exponent: a function $\phi:\rr_+\rightarrow\rr$ say, such that
$\ee e^{-u L(t)}= e^{-t\phi(u)}$.

To explicitly distinguish the multidimensional case from the
single-dimensional case, we will use bold symbols to denote
vectors and plain symbols to denote their coordinates, so that
$\bu\equiv(u_1,\ldots,u_d)$ for some $d>1$. The inner product of
two vectors $\bu$ and $\bv$ will be denoted by $\bu\cdot\bv$.
Finally, for a random vector $\bX$ and multidimensional spectrally
positive L\'evy process $\{\bL(t),t\ge 0\}$ we define its LST and
Laplace exponent analogously to the corresponding one-dimensional
objects. All inequalities for vectors should be understood
coordinate-wise. For sequences of one-dimensional objects we will
use subscripts and write them as $(a_n)$, whereas in the
multidimensional case we will sometimes use superscripts and write
them as $(\ba^n)$ to avoid double (or sometimes even triple)
subscripts.

As mentioned above, we focus on the system's workload process, and
would like to show that it possesses a specific branching
structure. To this end, we talk about {\it children} of a workload
portion $x$. Because we can treat a portion of workload $x$ as any
number $n$ of smaller portions $x/n$, we need to be able to talk
about infinitesimally small portions of workload. That is why we
shall adopt the language of fluid queues, and an infinitesimally
small portion of workload can be regarded as a {\it drop}. Hence,
from now on we shall use the terms workload and amount of fluid
interchangeably.

\section{Model description}
\label{Model description}

We consider a system of $N$ infinite-buffer fluid queues,
$Q_1,\ldots, Q_N$, and a single server. The server moves along the
queues in a cyclic order. When leaving $Q_j$ and before moving to
$Q_{j+1}$ (where, by convention, $Q_{N+1}$ should be understood as
$Q_1$), the server incurs a switch-over period whose duration is a
positive random variable $S_j$ independent of anything else. Queues are fed
by an $N$-dimensional L\'evy subordinator $\bW=\{\bW(t),t\ge0\}$
with Laplace exponent $\phi$. The server's work at $Q_i$ is
modelled by a spectrally positive L\'evy process $A_i$ with negative
drift (i.e., $\ee A_i(1)<0$), so that the work in the system evolves (while the
server is at $Q_i$) according to a L\'evy process
\[
\bA_i(t)
:=\Big(W_1(t),\ldots,W_{i-1}(t),A_i(t),W_{i+1}(t),\ldots,W_{N}(t)\Big).
\]
The Laplace exponent of $\bA_i(t)$ is denoted through
$\phi^{\bA}_i(\bu)$. 
\begin{rem}
Taking a spectrally positive L\'evy process $A_i$ rather than just a L\'evy subordinator $W_i$
allows for usage of a slightly bigger class of input processes.
For instance one can have a Brownian component as a component 
in an input process. The use of reflected Brownian motion is quite common in queueing theory, 
as it is the limiting model for a wide class of queueing models under the functional central limit theorem,
see, e.g., \citet{Whitt}.  
\end{rem}
\begin{rem}
In \autoref{Model description}, \autoref{Polling and Jirina} and 
\autoref{Steady-state distribution} the input process
$\bW$ remains fixed. In \autoref{VIP} we show that one can still
analyze the joint workload process if $\bW$ is allowed to change
at polling and switching instants. In classical polling models this
could correspond to, e.g., having arrival rate $\lambda_{ij}$ at $Q_i$
when the server is at $Q_j$.
\end{rem}
The service disciplines that we consider in
this paper satisfy the following property.
\begin{prp}
\label{Property2} If the server arrives at $Q_i$ to find the
workload level $x$ there, then during the course of the server's
visit, this workload is replaced by $\bH_i(x)$, where
$\{\bH_i(x),x\ge 0\}$ is an $N$-dimensional L\'evy subordinator
with Laplace exponent $\eta_i$, which can be any Laplace exponent
corresponding to an $N$-dimensional subordinator.
\end{prp}
In other words, if the server finds workload vector $\bx$ at the
time of arrival at $Q_i$ then the workload vector at the end of
the service of this queue becomes $\bx-x_i\be_i+\bH_i(x_i)$. Note
that any replacement process should stay positive so that work
does not become negative in $Q_i$ and does not decrease in the
other queues. It is also obvious that such a process should be
increasing in $x$. Therefore the assumption that $\bH_i(x)$ is a
subordinator is intuitively clear and natural. Moreover, due to
the independent stationary increments property of any L\'evy
process, we have that $\bH_i(x+y)\de\bar{\bH_i}(x)+
\bar{\bH_i}(y)$, where $\bar{\bH_i}(x)$ and $\bar{\bH_i}(y)$ are
independent with the same distribution as $\bH_i(x)$ and
$\bH_i(y)$, respectively (and `$\de$' denotes equality in
distribution). Note that this properties essentially says  that
each {\it drop} of the fluid in the served queue is treated in an
i.i.d.\ manner. It is further observed that 
\autoref{Property2} is a continuous analogue of the branching property
from \citet{Fuhrmann1}. Note that we allow different
service disciplines at different queues, as long as they obey
\autoref{Property2}.

\subsection*{Examples}
It is readily verified that the important exhaustive and gated
disciplines both satisfy \autoref{Property2}. 
\subsection*{The gated discipline}
Under the gated
discipline, the server only serves the workload that was present at the
start of the visit. Fluid flowing into the queue during the course
of the visit is served in the next visit. Assuming (c.f., \autoref{rem1} below) 
that the server
works with rate 1, i.e. $A_i(t)=W_i(t)-t$, and finds an amount of
work $x$ upon arrival, the time $\tau_i(x)$ spent in $Q_i$ is simply
$\tau_i(x)=x$, so that
\[
\bH_i(x)=\bW(\tau_i(x))=\bW(x)
\]
and
\begin{equation}\label{gated}
\eta_i(\bu)=\phi(\bu).
\end{equation}
\begin{rem}
\label{rem1}
Observe that in the case of the gated discipline we assumed that
$A_i(t)=W_i(t)-t$, so the workload level in $Q_i$, during the
server's visit, behaves as $x+W_i(t)-t$, where $x$ is the starting
level. This is a standard assumption in the theory of {\it storage
processes}, although, in principle, $A_i$ could be any spectrally
positive L\'evy process with negative drift. Such processes are
used frequently to model the workload level in fluid queues. The
gated discipline, however, becomes ill-defined in such a general
setting, thus every time we speak of the gated discipline we
tacitly assume that the server works with rate 1. 
\end{rem}
\subsection*{The exhaustive discipline}
Verification of the validity of \autoref{Property2} for
exhaustive service is somewhat more involved. To this end, first
recall that the server continues to work until the queue becomes
empty. Fluid arriving during the course of the visit is served in
the current visit. Let $T_i(x):=\inf\{t\ge 0: A_i(t)=-x\}$ be the
time needed to empty the queue with initial workload level $x$. It
is known that for $A_i(t)$ a spectrally positive L\'evy process
with negative drift, $T_i(x)$ is an a.s.\ finite stopping time.
Using the property of stationary and independent increments
generalized to stopping times~\cite[Theorem 3.1]{kyprianou} one
can see that $\bH_i(x)$ is a L\'{e}vy process. We need to compute
\[\ee \exp(-\bu\cdot\bH_i(x))=\ee \exp\left(-\sum_{j\neq
i}u_jW_j(T_i(x))\right),\] which does not depend on $u_i$. Now
consider, for a fixed vector $\bu\geq \0$, the function
$\phi_{i,\bu}(\theta):=\phi^{\bA}_i(u_1,\ldots,
u_{i-1},\theta,u_{i+1},\ldots,u_N)$, where $\theta\geq 0$. It is
easy to see that $\phi_{i,\bu}(0)\geq 0$, because $W_j(t),j\neq i$
are subordinators. Moreover,
$\lim_{\theta\rightarrow\infty}\phi_{i,\bu}(\theta)=-\infty$,
because $A_i(t)$ is not a subordinator. It follows from the
continuity of $\phi_{i,\bu}(\theta)$ that there exists a number
$\psi_i(\bu)\geq 0$ such that $\phi_{i,\bu}(\psi_i(\bu))=0$. Then
consider Wald's martingale
$\exp({-\bu\cdot\bA_i(t)+\phi^{\bA}_i(\bu)t})$ and pick
$u_i=\psi_i(\bu)$. Application of the optional sampling theorem to
the a.s.\ finite stopping time $T_i(x)$ gives $\ee
\exp({-\sum_{j\neq i}u_jW_j(T_i(x))+\psi_i(\bu) x})=1$. Hence $\ee
e^{-\bu\bH_i(x)}=e^{-\psi_i(\bu)x}$ and
\begin{equation}
\label{ex}\eta_i(\bu)=\psi_i(\bu).
\end{equation}
\subsection*{Mixed disciplines}
For a fixed $p\in[0,1]$ one can require that the fraction $p$ of
the present workload (upon arrival) is handled according to
some branching discipline $\bH^{(1)}$ and
the rest according to a different branching discipline $\bH^{(2)}$. Then,
\[
\bH_i(x)\de \bH^{(1)}(px)+\bH^{(2)}((1-p)x)
\]
is a branching discipline. For instance, one can consider a {\it $p$-exhaustive} discipline,
where it is required that the initial amount of workload $x$ be reduced to the level $px$.
Then,
\[
\bH_i(x)\de px + \bH^{\text{exhaustive}}((1-p)x)
\]
is a branching discipline with Laplace exponent
\[
\eta_i(\bu)=pu_i+\psi_i(\bu)(1-p).
\]
\subsection*{Composition of disciplines}
Because the composition of two L\'evy processes is again a L\'evy process, composition
of two branching disciplines is again a branching discipline. In other words, one can
consider a discipline, when upon finishing its service with initial workload level
$x$ according to a branching discipline $\bH^{(1)}$ (with Laplace
exponent $\eta^{(1)}$), the server immediately starts to work again
with initial workload level $\bH^{(1)}_i(x)$ according to a branching discipline $\bH^{(2)}$
(with $\eta^{(2)}$).
That is,
\[
\bH_i(x)=\bH^{(2)}\left(\bH^{(1)}(x)\right)
\]
is a branching discipline with Laplace exponent
\[
\eta_i(\bu)=\eta^{(2)}\left(\eta^{(1)}(\bu)\right).
\]
Naturally, one can think of the composition of more than two branching disciplines.
\subsection*{General method}
Recall that $T_i(x):=\inf\{t\ge 0: A_i(t)=-x\}$.
In the above examples we saw that as soon as we know the time $\tau_i(x)\le T_i(x)$
that the server spends serving $Q_i$ if it finds the workload
level $x$ upon its arrival, the replacement process $\bH_i(x)$ can
be written as $x\be_i+\bA_i(\tau_i(x))$. This relation will be
further exploited in \autoref{Steady-state distribution}.

\section{Multi-type Ji\v{r}ina branching processes}
\label{MTJBP}
The observation that the theory of branching processes of
Bienaym\'e-Galton-Watson type can be extended to random variables
taking their values in a continuous state-space appears to be due
to \citet{Jirina}, who points out that the key to such
an extension is to make the {\it offspring distribution}
infinitely divisible. The effect of an initial quantity of {\it
parent}, may then be described by raising the Laplace exponent of
the number of offspring per unit parent to the appropriate
non-negative power; note the similarity with the discrete case,
where the generating function of the distribution of offspring per
individual is raised to a non-negative integral power.

More formally, let $\{R_{i,j},i,j\ge0\}$ be a sequence of
integer-valued independent random variables, all distributed as
$R$, and let $\{G,G_j, j\ge1\}$ be a sequence of i.i.d.\
integer-valued random variables. Then the Bienaym\'e-Galton-Watson
process with immigration is defined as
\[
X_{n+1}=\sum_{j=1}^{X_n}R_{n+1,j}+ G_{n+1},\quad n=0,1,\ldots,
\]
where $X_0$ is a given (integer-valued) starting random variable.
In the terms of LST's, we have
\[
\ee \left(z^{X_{n+1}}|\cf_n^X\right)=\left(\ee z^R\right)^{X_n}\ee
z^{G},\quad |z|\leq 1.
\]

Now consider an i.i.d.\ sequence of L\'evy subordinators
$\{R_n(x),x\ge0\,,n=1,2,\ldots\}$ characterized by a common
Laplace exponent $\kappa$ and independent from $\{G,G_j, j\ge1\}$.
Then the sequence $X_n$, where
\begin{equation}\label{def:d0}
X_{n+1}=R_{n+1}(X_n)+G_{n+1},\quad n=0,1,\ldots,
\end{equation}
 is called {\it Ji\v{r}ina branching process} with
immigration. Note that every part of $X_n$ reproduces in an
i.i.d.\ way, because $R_{n+1}$ is a L\'{e}vy subordinator. The
distribution corresponding to $\kappa$ may be interpreted as
describing the quantity of offspring per unit quantity of parent.
Models of this type have been analyzed in various papers, see e.g.
\cite{Pakes,Seneta1,Seneta2}. Finally, observe that from
~\eqref{def:d0} it follows that
\begin{equation}\label{def:d1}
 \ee \left(e^{-u X_{n+1}}|\cf^X_n\right)=e^{-X_n\kappa(u)}\tG(u),\quad
 u\ge0.
\end{equation} This relation expresses what has been
said in the first paragraph of this section.

In a natural way this concept extends to the {\it multi-type}
case, where each type reproduces independently from others and
gives rise to a multi-type population. For $i=1,\ldots,N$, let
$\{\bR_i,\bR_i^n(x),x\ge0,n\ge0\}$ be mutually independent
sequences of i.i.d.\ $N$-dimensional L\'{e}vy subordinators with
Laplace exponent $\kappa_i$, and let $\{\bG,\bG^n,n\ge1\}$ be a
sequence of non-negative $N$-dimensional i.i.d.\ random vectors.
We define the one-step evolution of the process $\bX$ through
\begin{equation}\label{def:mult}\bX^{n+1}=\sum_{i=1}^N\bR_i^{n+1}(X^n_i)+\bG^{n+1},\end{equation}
where $\bR^{i,n+1}$ and $\bG^{n+1}$ are assumed to be independent
of $\cf^{\bX}_n$ and $\bX^0$ is a given starting random vector.
 Equation \eqref{def:d1}
then becomes
\begin{equation}
\label{def:dN}
 \ee \left(e^{-\bu\cdot\bX^{n+1}}|\cf^{\bX}_n\right)= e^{-\bX^n\cdot
 \bvarphi(\bu)}\tilde{\bG}(\bu).
\end{equation}
Such a sequence will be called a {\it multi-type Ji\v{r}ina
branching process} (MTJBP) with {\it branching mechanism}
$\bvarphi$ and immigration $\bG$.

Let $m_{i,j}$ be the expected quantity of type $j$ offspring per
unit quantity of parent population of type $i$, i.e.
\[
  m_{i,j}=\ee R_{i,j}(1)=\frac{\partial \kappa_i}{\partial u_j}(0).
\]
An essential role is played by what we will call the {\it mean
matrix} $M\equiv(m_{i,j})_{i,j=1,\ldots,N}$. Note that $M$ is a
non-negative matrix, so by the Perron-Frobenius theory the
spectral radius $\rho_M$ of $M$ is an eigenvalue such that any
other eigenvalue is strictly smaller in absolute value.

 Define $\bvarphi^{(i)}(\bu)$ inductively by
$\bvarphi^{(0)}(\bu)=\bu$ and
$\bvarphi^{(i)}(\bu)=\bvarphi^{(i-1)}(\bvarphi(\bu))$, for
$i=1,2,\ldots$. Finally, let $\|\cdot\|$ denote any norm on
$\rr^N$.
\begin{theorem}
\label{thm:branching} Let $\|\mathbf G\|$ be integrable then the
following holds
\begin{itemize}
  \item if $\rho_M<1$ (subcritical case) then $\bX^n$ converges in distribution
  to a random vector $\bX^\infty\in\rr_+^N$ satisfying
    \begin{equation}\label{eq:br}
  \ee e^{-\bu\cdot\bX^\infty}=\prod_{k=0}^\infty \tilde\bG(\bvarphi^{(k)}(\bu)),\quad\bu\geq\0;
  \end{equation}
  \item if $\rho_M>1$ (supercritical case) and $G_i>0$ with positive probability for
  all $i$ then $\|\bX^n\|\rightarrow\infty$ a.s.\ as $n\rightarrow\infty$.
\end{itemize}
\end{theorem}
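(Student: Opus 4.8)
The plan is to handle the two regimes separately, in both cases starting from the one-step identity \eqref{def:dN}. Taking expectations there yields the transform recursion $\tilde\bX^{n+1}(\bu)=\tilde\bX^{n}(\bvarphi(\bu))\,\tilde\bG(\bu)$ for $\tilde\bX^n(\bu):=\ee e^{-\bu\cdot\bX^n}$, and iterating it $n$ times gives
\begin{equation*}
\tilde\bX^{n}(\bu)=\tilde\bX^{0}\big(\bvarphi^{(n)}(\bu)\big)\prod_{k=0}^{n-1}\tilde\bG\big(\bvarphi^{(k)}(\bu)\big),\qquad\bu\ge\0,
\end{equation*}
so everything reduces to understanding the iterates $\bvarphi^{(k)}(\bu)$. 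First I would record two structural facts: writing $\bvarphi=(\kappa_1,\dots,\kappa_N)$, each coordinate $\kappa_i$ is non-decreasing (since $\bR_i$ is a subordinator) and concave (being a Bernstein function), with $\bvarphi(\0)=\0$ and Jacobian at $\0$ equal to $M$, because $\partial\kappa_i/\partial u_j(\0)=m_{i,j}$.

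For the subcritical case I would use concavity in the form $\kappa_i(\bu)\le\kappa_i(\0)+\nabla\kappa_i(\0)\cdot\bu=(M\bu)_i$, i.e.\ $\bvarphi(\bu)\le M\bu$, and combine it with monotonicity to obtain $\bvarphi^{(k)}(\bu)\le M^{k}\bu$ by induction. Since $\rho_M<1$ forces $M^{k}\to0$, this gives $\bvarphi^{(k)}(\bu)\to\0$ for every $\bu\ge\0$. Two consequences follow: the prefactor satisfies $\tilde\bX^{0}(\bvarphi^{(n)}(\bu))\to\tilde\bX^0(\0)=1$ by continuity of the transform at the origin, and the infinite product converges, since $1-\tilde\bG(\bvarphi^{(k)}(\bu))\le\bvarphi^{(k)}(\bu)\cdot\ee\bG\le(M^k\bu)\cdot\ee\bG$ is summable: $\sum_{k}(M^k\bu)\cdot\ee\bG=((I-M)^{-1}\bu)\cdot\ee\bG<\infty$. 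Here integrability of $\|\bG\|$ is exactly what guarantees $\ee\bG<\infty$. Passing to the limit produces \eqref{eq:br}, and since the right-hand side is continuous at $\0$ with value $1$ there, the continuity theorem for Laplace transforms upgrades pointwise transform convergence to convergence in distribution of $\bX^n$ to a proper limit $\bX^\infty$.

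For the supercritical case the transform identity alone would only deliver convergence in distribution to $+\infty$, so the genuine task is the \emph{almost sure} statement, and my plan is to manufacture a convergent non-negative supermartingale. Let $\bv\ge\0$, $\bv\ne\0$, be a Perron--Frobenius eigenvector of $M$, so $M\bv=\rho_M\bv$ with $\rho_M>1$. Using the expansion $\kappa_i(t\bv)=t(M\bv)_i+o(t)=t\rho_M v_i+o(t)$ and $\rho_M-1>0$, I can fix $t>0$ small enough that $\bw:=t\bv$ is a \emph{supersolution}, $\bvarphi(\bw)\ge\bw$ coordinatewise (coordinates with $v_i=0$ being automatic as $\kappa_i\ge\0$). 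Writing $\beta:=\tilde\bG(\bw)$, I note $\beta\in(0,1)$: positivity is clear, while $\beta<1$ holds because $\bw\cdot\bG=t\,\bv\cdot\bG\ge t\,v_iG_i>0$ with positive probability for any $i$ with $v_i>0$, using that $G_i>0$ with positive probability. Evaluating \eqref{def:dN} at $\bu=\bw$ and using $\bvarphi(\bw)\ge\bw$ with $\bX^n\ge\0$,
\begin{equation*}
\ee\big(e^{-\bw\cdot\bX^{n+1}}\,\big|\,\cf^{\bX}_n\big)=\beta\,e^{-\bX^n\cdot\bvarphi(\bw)}\le\beta\,e^{-\bw\cdot\bX^n},
\end{equation*}
which shows that $M_n:=\beta^{-n}e^{-\bw\cdot\bX^n}$ is a non-negative supermartingale.

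By the martingale convergence theorem $M_n\to M_\infty<\infty$ almost surely, whence $e^{-\bw\cdot\bX^n}=\beta^{n}M_n\to0$ (as $0<\beta<1$); therefore $\bw\cdot\bX^n\to+\infty$ almost surely, and since $\bw\ge\0$, $\bw\ne\0$, this forces $\|\bX^n\|\to\infty$ almost surely. The step I expect to be the main obstacle is precisely this supercritical argument: the naive routes, through the limiting transform or through the normalised martingale $\bv\cdot\bX^n/\rho_M^n$, are fragile, because the latter may degenerate to $0$ in the absence of an $L\log L$-type moment condition, whereas only $\|\bG\|$ is assumed integrable. The supersolution device sidesteps this entirely, since it uses only a local expansion of $\bvarphi$ near the origin and no control over its global fixed points; the one remaining point needing care is the differentiability of $\kappa_i$ at $\0$, which is exactly the finiteness of the first moments encoded in $M$.
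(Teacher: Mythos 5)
Your proof is correct, and it is worth separating the two halves when comparing with the paper. For the supercritical claim you use exactly the same key construction as the paper: a supersolution $\bw=\theta\bv$ with $\bvarphi(\bw)\ge\bw$, obtained from concavity of $\theta\mapsto\kappa_i(\theta\bv)$ and the derivative $\rho_Mv_i>v_i$ at the origin (you are slightly more careful about the coordinates with $v_i=0$, which the paper glosses over). The only difference is how the conclusion is extracted from $\ee\bigl(e^{-\bw\cdot\bX^{n+1}}\mid\cf^{\bX}_n\bigr)\le\tilde\bG(\bw)\,e^{-\bw\cdot\bX^n}$: the paper iterates expectations to get $\ee e^{-\bw\cdot\bX^n}\le(\tilde\bG(\bw))^n\to0$, which on its face only gives convergence in probability and needs the implicit summability/Borel--Cantelli step to reach the stated a.s.\ conclusion, whereas your non-negative supermartingale $\beta^{-n}e^{-\bw\cdot\bX^n}$ delivers the almost sure statement directly --- a marginally cleaner finish of the same idea. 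For the subcritical claim the paper simply cites an external reference, so your argument (domination $\bvarphi^{(k)}(\bu)\le M^k\bu$ via concavity and monotonicity, summability of $1-\tilde\bG(\bvarphi^{(k)}(\bu))$ using $\ee\|\bG\|<\infty$ and $\sum_kM^k=(I-M)^{-1}$, then L\'evy's continuity theorem) is a genuine self-contained addition; note only that it presupposes finiteness of the entries of $M$ (differentiability of $\kappa_i$ at $\0$), which you correctly flag and which is implicit in the paper's definition of the mean matrix.
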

We remark that the case of $\rho_M=1$ is substantially more
involved and its treatment is beyond the scope of this paper.
\begin{proof}
The first claim follows from \cite{Altman02}. We prove
the second claim. Let $\bv$ be a non-negative eigenvector
associated to $\rho_M>1$; such a vector exists according to the
Perron-Frobenius theory. Note that $\kappa_i(\theta\bv)$ as a
function of $\theta$, is the Laplace exponent of the L\'{e}vy
process $\bv\cdot\bR_i(t)$, and hence it is concave with
derivative at 0 equal to $(m_{i,1},\ldots,m_{i,N})\cdot\bv=\rho_M
v_i>v_i$. Hence we can choose $\theta^*_i>0$ such that
$\kappa_i(\theta\bv)\geq \theta v_i$ for all
$\theta\in[0,\theta^*_i]$. Let $\theta^*>0$ be the minimum over
$\theta^*_i$, then
\begin{equation}\label{eq:supercritical}\bvarphi(\theta^*\bv)\geq\theta^*\bv.\end{equation}
Combining~\eqref{def:dN} and~\eqref{eq:supercritical} we get
\[0\leq \ee e^{-\theta^*\bv\cdot\bX^{n+1}}\leq\ee e^{-\theta^*\bv\cdot\bX^{n}}\tilde\bG(\theta^*\bv)\leq
(\tilde\bG(\theta^*\bv))^{n+1}\rightarrow 0\] as
$n\rightarrow\infty$, because $\ee e^{-\theta^*\bv\cdot\bG}<1$.
Hence $\theta^*\bv\cdot\bX^n\rightarrow \infty$ and so also
$\|\bX^n\|\to\infty$ with probability 1.
\end{proof}
It is easy to see from the final part of the proof that the
additional condition in the supercritical case, namely $G_i>0$
with positive probability for all $i$, can be substituted with the
following one. It is enough to assume that $\|\bG\|$ is not
identically 0 and there exists a positive vector $\bv$ such that
$M\bv>\bv$. It is known that such a vector exists if $M$ is
irreducible. We do not assume irreducibility of $M$, because our
polling system with exhaustive discipline at the $N$-th queue will
correspond to a matrix $M$ which is not irreducible. We elaborate
more on the stability issue in \autoref{Ergodicity}.

\section{Polling systems and multi-type continuous-state branching processes}
\label{Polling and Jirina}

In this section we shall prove that for our model the amounts of
fluid in the $N$ queues on successive epochs that the server
reaches $Q_1$ form an MTJBP with immigration, which is one of the
main results of this paper. Define $t_n$ as the (random) time
point that the server reaches $Q_1$ for the $n$th time. Let $t_0$
correspond to time 0.
\begin{theorem}\label{theorem1}
 Consider a polling system from \autoref{Model description} with switch-over times
$S_i$. Assume that the service discipline at $Q_i$ satisfies
\autoref{Property2} with Laplace exponent $\eta_i$,
$i=1,\ldots,N$. Then the amount of fluid $\bZ^n$ in the different
queues at time points $t_n$ constitutes a multi-type Ji\v{r}ina
branching process with immigration, where the branching mechanism
$\bvarphi$ is given by the recursive equations
\begin{equation}
\label{recursive_eq}
 \kappa_i(\bu)=\eta_i\left(u_1,\ldots,u_i,\kappa_{i+1}(\bu),\ldots,\kappa_N(\bu)\right),\quad i=1,\ldots,N,
\end{equation}
and the immigration LST $\tilde\bG(\bu)$ is given by
\begin{equation}
\label{immigration_eq} \tilde\bG(\bu)=\prod_{i=1}^N \tilde
S_i\left(\phi(u_1,\ldots,u_i,\kappa_{i+1}(\bu),\ldots,\kappa_N(\bu))\right).
\end{equation}
\end{theorem}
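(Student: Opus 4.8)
The plan is to track the joint workload through one full cycle of the server --- from its arrival at $Q_1$ at time $t_n$ until its next arrival at $Q_1$ at time $t_{n+1}$ --- and to compute the conditional Laplace transform $\ee[e^{-\bu\cdot\bZ^{n+1}}\mid\cf^{\bZ}_n]$ by a backward induction over the queues $Q_N,Q_{N-1},\ldots,Q_1$. The aim is to show that this transform equals $e^{-\bZ^n\cdot\bvarphi(\bu)}\tilde\bG(\bu)$ with $\bvarphi$ and $\tilde\bG$ as in \eqref{recursive_eq}--\eqref{immigration_eq}; by \eqref{def:dN} this is precisely the assertion that $\bZ^n$ is an MTJBP with the stated branching mechanism and immigration. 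First I would set up the one-cycle recursion. Let $\boldsymbol V_i$ denote the workload vector at the instant the server arrives at $Q_i$ within the current cycle, so that $\boldsymbol V_1=\bZ^n$ and $\boldsymbol V_{N+1}=\bZ^{n+1}$. Serving $Q_i$ replaces its content via $\bH_i$ and the subsequent switch-over adds a fresh increment $\bW(S_i)$, so that, given $\boldsymbol V_i=\bv$ with $i$th coordinate $v_i$,
\[ \boldsymbol V_{i+1}=\bv-v_i\be_i+\bH_i(v_i)+\bW(S_i), \]
where, crucially, the subordinator $\bH_i$, the switch-over time $S_i$ and the increment $\bW(S_i)$ are independent of the past given $v_i$. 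This rests on \autoref{Property2} (which abstracts the discipline at $Q_i$ into a L\'evy subordinator with exponent $\eta_i$, independent of the current content) together with the stationary-independent-increments property of $\bW$, the disjointness of the time intervals used by successive services and switch-overs, and the independence of the $S_i$.

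Next I would run the backward induction. I claim that for each $i$
\[ \ee\big[e^{-\bu\cdot\bZ^{n+1}}\mid\boldsymbol V_i=\bv\big]=e^{-\bv\cdot\bw^{(i)}(\bu)}\prod_{j=i}^N\tS_j\big(\phi(\bw^{(j+1)}(\bu))\big),\quad \bw^{(i)}(\bu)=\big(u_1,\ldots,u_{i-1},\kappa_i(\bu),\ldots,\kappa_N(\bu)\big), \]
with the boundary convention $\bw^{(N+1)}(\bu)=\bu$ and empty product $1$ at $i=N+1$. To pass from $i+1$ to $i$, I condition on $\boldsymbol V_{i+1}$, invoke the Markov structure above, and evaluate the three independent factors: $\ee[e^{-\bw\cdot\bH_i(v_i)}]=e^{-v_i\eta_i(\bw)}$ by \autoref{Property2}, and $\ee[e^{-\bw\cdot\bW(S_i)}]=\tS_i(\phi(\bw))$ by conditioning on $S_i$. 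Writing $\bw=\bw^{(i+1)}(\bu)$, the exponent $-\bw\cdot\bv+w_iv_i-v_i\eta_i(\bw)$ collapses to $-\bv\cdot\bw^{(i)}(\bu)$ precisely because the $i$th coordinate of $\bw^{(i)}$ is $\eta_i(\bw^{(i+1)})=\eta_i(u_1,\ldots,u_i,\kappa_{i+1}(\bu),\ldots,\kappa_N(\bu))=\kappa_i(\bu)$ --- the defining recursion \eqref{recursive_eq} --- while the remaining coordinates of $\bw^{(i+1)}$ are carried over unchanged. Specializing to $i=1$ gives $\bw^{(1)}(\bu)=\bvarphi(\bu)$, and the accumulated switch-over factor becomes the product \eqref{immigration_eq}, whence $\ee[e^{-\bu\cdot\bZ^{n+1}}\mid\bZ^n]=e^{-\bZ^n\cdot\bvarphi(\bu)}\tilde\bG(\bu)$, as required.

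The main obstacle I anticipate is the rigorous justification of the conditional (Markov) independence used at each inductive step, rather than the algebra. One must argue that, given $\boldsymbol V_{i+1}$, the remainder of the cycle is independent of the entire history up to the arrival at $Q_{i+1}$; the delicate case is the exhaustive discipline, where $\bH_i$ is constructed from $\bW$ stopped at the first-passage time $T_i(\cdot)$, so one has to confirm that this construction consumes only the input during the service of $Q_i$ and leaves the switch-over increment $\bW(S_i)$ as a genuinely fresh, independent quantity. Once this is secured, the identity $\ee[e^{-\bu\cdot\bZ^{n+1}}\mid\bZ^n]=e^{-\bZ^n\cdot\bvarphi(\bu)}\tilde\bG(\bu)$ holds with the same $\bvarphi$ and $\tilde\bG$ for every $n$ --- the driving objects $\bH_i,S_i$ and the $\bW$-increments being i.i.d.\ across cycles --- which is exactly \eqref{def:dN} and thereby identifies $(\bZ^n)$ as the claimed MTJBP with immigration.
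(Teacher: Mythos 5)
Your proof is correct, but it takes a different route from the paper's. The paper argues \emph{genealogically}: it colours the fluid at time $t_n$ by type (colour $c_i$ for the initial content of $Q_i$, colour $c_0$ for fluid born in switch-overs, descendants inheriting the colour of the fluid whose service produced them), defines $\bR_i^{n+1}(x_i)$ and $\bG^{n+1}$ as the colour-$c_i$ and colour-$c_0$ populations at $t_{n+1}$, and establishes the pathwise branching representation \eqref{main} together with the distributional fixed-point relation $\bR_i(x_i)\de\sum_{j\le i}\be_jH_{i,j}(x_i)+\sum_{j>i}\bR_j(H_{i,j}(x_i))$, from which \eqref{recursive_eq} and \eqref{immigration_eq} follow by the same conditioning algebra you use. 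You instead decompose by \emph{time within the cycle}, tracking the state $\boldsymbol V_i$ at successive arrival instants and computing the conditional LST of $\bZ^{n+1}$ by backward induction over queues; the inductive step is exactly the paper's computation of $\kappa_i$, but organised differently. What the paper's route buys is an explicit pathwise construction of the offspring subordinators $\bR_i$ with independent components, which (i) makes it immediate that each $\kappa_i$ in \eqref{recursive_eq} is a bona fide Laplace exponent of an $N$-dimensional subordinator --- needed for $\bvarphi$ to qualify as a branching mechanism so that \autoref{thm:branching} applies --- and (ii) yields the genealogical interpretation of the infinite-product formula exploited in the remarks after \autoref{Cor}. Your route delivers \eqref{def:dN} directly and identifies $(\bZ^n)$ as an MTJBP \emph{in law} (the product form of the conditional LST is equivalent to the convolution structure \eqref{def:mult}), which suffices for all the distributional conclusions of the paper; to be complete you should add one sentence observing that the class of $N$-dimensional subordinator exponents is closed under the composition in \eqref{recursive_eq} (subordination), so each $\kappa_i$ is indeed such an exponent. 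Your flagged concern about the Markov/independence structure through the cycle is the right one, and is resolved exactly as you indicate: \autoref{Property2} plus the independence of the $S_i$ and the independent increments of $\bW$ over the disjoint service and switch-over intervals; the paper handles this point at the same informal level.
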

We tacitly assume that $\kappa_N$ is read as $\eta_N$, the
argument of $\tS_N$ in \eqref{immigration_eq} is $\phi(\bu)$ and
$\bZ^0$ is a given starting distribution. Importantly, an
immediate consequence of \autoref{theorem1} is that we can use
\autoref{thm:branching} to obtain the limiting ({\it
steady-state}) distribution of the joint workload $\bZ^\infty$ at
polling epochs for our polling model, cf. \eqref{eq:br}.
\begin{proof}
Consider the polling system at time $t_n$ and assign the color
$c_i$ to the fluid in $Q_i$ and denote its amount through $x_i$,
for all $i=1,\ldots,N$. Now suppose that fluid
arriving during switch-overs has color $c_0$, and fluid arriving
during  the service of $c_i$-colored fluid has the same color
$c_i$ for all $i=0,\ldots,N$. We stress that our coloring depends
on the color of the fluid in service, i.e., {\em not} on the
queue. Given $x_i$ -- the amount of fluid of color $c_i$ at time
$t_n$, denote  the amount of fluid of color $c_i$ at time
$t_{n+1}$ through $\bR_i^{n+1}(x_i)$ (offspring of type $i$) if
$i\neq 0$, and $\bG^{n+1}$ (immigration) if $i=0$, so that
\begin{equation}
\label{main}
\bZ^{n+1}=\sum_{i=1}^N\bR_i^{n+1}(B^n_i)+\bG^{n+1},\quad
n\ge0.\end{equation}

Obviously the sequences $\{\bR_i^n(x), x\ge0, n\ge 1\}$ and
$\{\bG^n,n\ge1\}$ constitute sequences of i.i.d.\ increasing
stochastic processes and random vectors, respectively. Moreover,
$\bG^n$ is independent from $\bR_i^n$ for $i=1,\ldots,N$, because
the amount of fluid arriving during switch-over periods depends
only on the lengths $S_i$ (independent from anything else) of
those periods and the input process $\bW$, but not on the amount
of fluid already in the system. Denote the common distribution of
$\{\bR_i^n(x), x\ge0, n\ge 1\}$ and $\{\bG^n,n\ge1\}$ by $\bR_i$
and $\bG$, respectively.

Note that at the time instant when the server starts polling
$Q_{i+1}$, the amount of fluid of color $c_i$ is given by
$\bH_i(x_i)$, so
\begin{equation}\label{R_i} \bR_i(x_i)\de\sum_{j=1}^i\be_j
H_{i,j}(x_i)+\sum_{j=i+1}^N\bR_j(H_{i,j}(x_i)),
\end{equation}
where $H_{i,j}(x)$ denotes the $j$th element of $\bH_i(x)$.
Note that the color $c_i$ fluid can appear in the system
only as a replacement of the fluid already present in $Q_i$ at the beginning
of the polling cycle (which corresponds to the part $\sum_{j=1}^i\be_j
H_{i,j}(x_i)$), or as a replacement of the fluid that arrived to the, yet to be served, queues $Q_{i+1},\ldots,Q_N$, during the service of $Q_i$ (which corresponds to the part
$\sum_{j=i+1}^N\bR_j(H_{i,j}(x_i))$).

Backward induction (from $i=N$ to $i=1$) and stationarity and
independence of increments of $\bH_i$ imply the same properties
for $\bR_i$. Hence, $\bR_i$ are L\'evy subordinators, for
$i=1,\ldots,N$. Finally, the mutual independence of $\bR_i^n$ for
$i=1,\ldots,N$ follows from the \autoref{Property2}. Hence
$\{\bZ^n,n\ge0\}$ is a MTJBP.

Next, we compute the Laplace exponent $\kappa_i$ of $\bR_i$. Using
\eqref{R_i} and conditioning on $\bH_i(x)$ we obtain
\begin{align*}
\ee \exp\left(-\bu\cdot\bR_i(x)\right)&=
 \ee\exp\left(-\sum_{j=1}^i u_j H_{i,j}(x)
 -\sum_{j=i+1}^N H_{i,j}(x)\kappa_j(\bu)\right)\\
 &=
 \exp\left(-x\eta_i(u_1,\ldots,u_i,\kappa_{i+1}(\bu),\ldots,\kappa_N(\bu))\right),
\end{align*}
so that~\eqref{recursive_eq} holds.

It is left to compute the LST of $\bG$. First note that we can
write $\bG=\sum_i \bG_i$, where $\bG_i$ are mutually independent
and represent fluid at the end of the polling cycle generated by
the $i$th switch-over. That is,
\[
\bG_i\de\sum_{j=1}^i\be_j W_j(S_i)+\sum_{j=i+1}^N\bR_j(W_j(S_i)).
\]
 Similarly
as above we obtain
\begin{align*}\ee \exp(-\bu\cdot\bG_i)&=
 \ee\exp\left( -\sum_{j=1}^i u_j W_j(S_i)
 -\sum_{j=i+1}^N W_j(S_i)\kappa_j(\bu)\right)\\
 &=
 \ee\exp\left(-S_i\phi(u_1,\ldots,u_i,\kappa_{i+1}(\bu),\ldots,\kappa_N(\bu))\right),\end{align*}
 which proves~\eqref{immigration_eq}.
\end{proof}

Let $\bZ_i$ and $\bE_i$ denote the random variable having the steady-state distribution of the joint amount of fluid
in each queue at the beginning of a visit (polling instant) to
$Q_i$ and at the end of a visit (switching instant) to
$Q_i$, respectively.
\begin{samepage}
\begin{cor} \label{Cor}
$\bZ_i$ and $\bE_i$ can be related to each other by
\begin{equation}
\label{Z_i}
 \tilde\bZ_{i+1}(\bu)=\tilde\bE_i(\bu)\tS_i(\phi(\bu))
\end{equation}
 and
 \[
\tilde\bE_i(\bu)=\tilde\bZ_i(u_1,\ldots,u_{i-1},\eta_i(\bu),u_{i+1},\ldots,u_N),
 \]
where
\[
\tilde\bZ_1(\bu)=\prod_{k=0}^\infty\tilde\bG(\bvarphi^{(k)}(\bu)),
\]
with $\tilde\bG$ and $\bvarphi$ given by \autoref{theorem1}.
\end{cor}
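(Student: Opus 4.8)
The plan is to establish the three identities as distributional relations between the workload vectors at consecutive embedded epochs within a single cycle, and then to pass them to the stationary regime. I would write $\bZ_i^n$ and $\bE_i^n$ for the workload vectors at, respectively, the polling instant at $Q_i$ and the switching instant at $Q_i$ during the $n$th cycle, so that $\bZ_1^n=\bZ^n$ in the notation of \autoref{theorem1}. Each of the two non-trivial relations will first be derived at the level of these per-cycle quantities and only afterwards taken to the limit $n\toi$.

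For the second relation I would condition on the state at the polling instant at $Q_i$. By \autoref{Property2}, during the visit to $Q_i$ the workload $Z_{i,i}^n$ in the $i$th queue is replaced by $\bH_i(Z_{i,i}^n)$ while the remaining coordinates are left untouched, so
\[
\bE_i^n\de\bZ_i^n-Z_{i,i}^n\be_i+\bH_i(Z_{i,i}^n).
\]
Taking the LST and conditioning on $\bZ_i^n$, the subordinator identity $\ee[e^{-\bu\cdot\bH_i(x)}\mid\bZ_i^n]=e^{-x\eta_i(\bu)}$ replaces the $i$th argument $u_i$ by $\eta_i(\bu)$ and leaves the others fixed, giving $\tilde\bE_i^n(\bu)=\tilde\bZ_i^n(u_1,\ldots,u_{i-1},\eta_i(\bu),u_{i+1},\ldots,u_N)$.

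For the first relation, observe that between the switching instant at $Q_i$ and the polling instant at $Q_{i+1}$ (cyclically, $Q_{N+1}=Q_1$) there is only the switch-over period $S_i$, during which the input $\bW$ accrues and nothing is served, so $\bZ_{i+1}^n\de\bE_i^n+\bW(S_i)$ with $\bW(S_i)$ independent of $\bE_i^n$. Conditioning on $S_i$ and using $\ee e^{-\bu\cdot\bW(t)}=e^{-t\phi(\bu)}$ gives $\ee e^{-\bu\cdot\bW(S_i)}=\tS_i(\phi(\bu))$, whence $\tilde\bZ_{i+1}^n(\bu)=\tilde\bE_i^n(\bu)\,\tS_i(\phi(\bu))$. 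The third relation is immediate: by \autoref{theorem1} the sequence $\{\bZ^n\}$ is an MTJBP with immigration, so in the subcritical case \autoref{thm:branching} yields $\bZ^n\Rightarrow\bZ^\infty$ with LST $\prod_{k=0}^\infty\tilde\bG(\bvarphi^{(k)}(\bu))$, and $\bZ_1$ is by definition this limit.

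The one point that requires care — and the one I would treat as the main, though minor, obstacle — is the passage from the per-cycle identities to the stationary ones. \autoref{thm:branching} delivers convergence in distribution only for $\bZ_1^n=\bZ^n$; the intermediate quantities $\bE_i^n$ and $\bZ_{i+1}^n$ must be shown to converge as well. This follows because each epoch-to-epoch transformation above acts on the LST through a fixed continuous substitution (replacing one coordinate by $\eta_i(\bu)$, or multiplying by $\tS_i(\phi(\bu))$), so weak convergence of $\bZ_1^n$ propagates around the cycle and $\bE_i^n\Rightarrow\bE_i$, $\bZ_i^n\Rightarrow\bZ_i$ for every $i$. Letting $n\toi$ in the two displayed LST identities then produces exactly the stated relations for the steady-state vectors $\bZ_i$ and $\bE_i$.
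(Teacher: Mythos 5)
Your proposal is correct and matches the argument the paper intends: the corollary is read off from the embedded one-cycle dynamics already established in the proof of \autoref{theorem1} (the visit transforms the LST by substituting $\eta_i(\bu)$ for $u_i$ via \autoref{Property2}, the switch-over multiplies it by $\tS_i(\phi(\bu))$, and $\tilde\bZ_1$ comes from \autoref{thm:branching}), which is exactly what you do. Your extra care in propagating the weak convergence of $\bZ^n$ to the intermediate epochs $\bE_i^n$, $\bZ_i^n$ is a legitimate tightening of a step the paper leaves implicit.
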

\end{samepage}
\begin{rem}
By the same token, we can, for arbitrary $i=1,\ldots,N$,  find
$\tilde\bZ_i(\bu)$ as well (renumber such that $Q_i$ becomes
$Q_1$). The infinite product formula for the Laplace transform
of the distribution of $\bZ_1$ is typical in the area of (classical)
polling models. This kind of formula can be numerically inverted
to obtain various performance metrics, see \citet{Abate95, Abate06} and \citet{Choudhury96}. Such an infinite product already arises in the case of a M/G/1 queue ($N=1$) with gated vacations. For example \cite[Section 2.5, Formula (5.19)]{Takagi2}  gives the following relation for $Q(z)$, the generating function of the queue length distribution at the end of a vacation:
\[
Q(z)=Q(B^*(\lambda(1-z))V^*(\lambda(1-z)),
\]
where $\lambda$ denotes arrival rate and $B^*(\cdot)$, $V^*(\cdot)$ are the LST of service time and vacation length, respectively. This immediately results in
\[
Q(z)=\prod_{i=0}^\infty V^*(\lambda(1-\delta^{(i)}(z))),
\]
where $\delta^{(i)}(z)=B^*(\lambda(1-\delta^{(i-1)}(z)))$, $i=1,2,\ldots,$ $\delta^{(0)}(z)=z$.
In the special case of a single queue ($N=1$) and exhaustive service, the infinite product degenerates to $V^*(\lambda(1-z))$ because at the end of each visit, the system has become empty.

For arbitrary $N$ and classical Poisson input processes, \citet{Resing} presents the joint queue length generating function at epochs the server begins a visit to $Q_1$. This generating function is also given in the form of an infinite product, cf. \cite[Theorem 1 and Theorem 3]{Resing}.
\end{rem}
\begin{rem}
The interpretation of the above infinite-product expression for
$\tilde\bZ_1(\bu)$ is the following. The terms of the infinite
product correspond to independent contributions to the workload
vector at a polling instant of $Q_1$. The $0$th term represents
work still present, that has arrived during the switch-over
periods in the cycle that has just ended. The 1st term represents
work that has arrived during the service of work that had arrived
one cycle earlier during switch-over periods. And so on: the $k$th
term, $k=1,2,\dots$, represents work that was initiated $k$ cycles
before the last cycle by ancestral work arriving during a
switch-over period.
\end{rem}
\begin{rem}
A special case of our model is the fluid polling model studied in
\citet{Czerniak}: there the L\'evy input
reduces to $N$ linear deterministic processes. Another special
case of our model is the classical polling model in which the
L\'evy input corresponds to $N$ independent CPPs.
\end{rem}
\begin{rem}
\label{rem:GG} From the proof of \autoref{theorem1} we can
clearly see the importance of the branching property. Most
importantly, it implied the distributional equality~\eqref{R_i}
for the distribution of $\bR_i$. Then the distribution of $\bG$
follows in terms of the Laplace exponents of the $\bR_i$'s.
However, to establish the MTJBP structure of the L\'evy-driven
polling system, all we need to show is that \eqref{main}
(or equivalently~\eqref{def:mult}) holds with independent components.

One can think of disciplines that do not satisfy
\autoref{Property2}, but for which still the workload
evolution can be described by \eqref{main}. An example is
the {\it globally-gated} discipline (see \cite{GG}), which works
as follows. At the beginning of each cycle, all fluid in
$Q_1,\ldots,Q_N$ is marked. During the next cycle, the server
serves all the marked fluid. The newly arrived fluid, however, has
to wait until being marked at the next cycle-beginning, and will
be served during the next cycle. This discipline does not satisfy
\autoref{Property2}, but assuming that the server works with
rate 1, an equation of the form~\eqref{main} can be derived with
$\bR_i^n\de \bW$ and $\bG=\sum\bG_i$, where $\bG_i\de \bW(S_i)$.
Therefore such a model also has the MTJBP structure with
branching mechanism $\boldsymbol\kappa(\bu)=(\phi(\bu),\ldots,\phi(\bu))$ and 
immigration LST $\tilde\bG(\bu)=\prod \tilde S_i(\phi(\bu))$.
\end{rem}
As in the introduction, it should be noted that a relation between L\'evy-driven polling systems 
and continuous-state space branching processes was
considered before by \citet{altman}. 
In that work, the assumption is imposed that all queues are fed by 
{\it identical} L\'evy subordinators, and it is precisely this assumption 
that enables the construction of a continuous state-space branching process. Moreover,
the relation in \cite{altman} is used only to derive the first two
waiting-time moments and not to determine the structure of the branching process itself.

\section{Steady-state distribution at an arbitrary epoch}
\label{Steady-state distribution}

Having determined the LST of the joint steady-state workload at
polling and switching instants in the previous section, we now
concentrate on its counterpart {\em at an arbitrary instant in
time}. It should be noted that this distribution was not even
found before for the classical polling models with independent CPP
inputs, except for the marginal distributions. In order to do so,
we need to make the notion of service disciplines more precise.

Firstly, recall that work in the system (while the server is at
$Q_i$) evolves according to a L\'{e}vy process
\[
\bA_i(t)
:=\Big(W_1(t),\ldots,W_{i-1}(t),A_i(t),W_{i+1}(t),\ldots,W_{N}(t)\Big),
\]
where $A_i(t)$ can be any spectrally positive L\'{e}vy process
with negative drift. The Laplace exponent of $\bA_i(t)$ is denoted
through $\phi^{\bA}_i(\bu)$. Let $\mathcal F_i$ be an augmented
right-continuous filtration, such that $\bA_i(t)$ is a L\'{e}vy
process with respect to $\mathcal F_i$ (one can take an augmented
natural filtration of $\bA_i(t),t\geq 0$). Let $\tau_i(x)$ denote
the time the server spends at $Q_i$ when it finds $x$ units of
work in this queue upon arrival. Loosely speaking, $\tau_i(x)$ is
a stopping rule for the server, which observes the process
$\bA_i(t)$. This motivates the following assumption.
\begin{assumption}
\label{asm_stoppingTime}
$\tau_i(x)$ is an $\mathcal F_i$-stopping time for every $x$.
\end{assumption}
\autoref{asm_stoppingTime} ensures that the disciplines are
`non-anticipating'. For example, we exclude the cases, when 
the server decides to stop the service if it `sees' that, for instance, the cumulative input
in the next $T$ units of time is less than some $\varepsilon$.
The above assumption expresses the fact, that a service strategy or discipline
should be based only on the knowledge of the evolution of the system up to 
the current time-point.
Observe that for the gated
discipline $\tau_i(x)=x$ and for exhaustive
$\tau_i(x)=T_i(x)=\inf\{t\geq 0:A_i(t)=-x\}$ (recalling the
definition of $T_i(x)$ from \autoref{Model description}). It
is easily verified that  both are $\mathcal F_i$-stopping times,
as desired.

We also require that a discipline is `work conserving', that is,
the server never stays at a queue after it became empty. This is
made precise in the following assumption.
\begin{assumption}\label{asm:workConservation}
For every $x$ it holds that $\tau_i(x)\leq T_i(x)$ a.s.
\end{assumption}
Note that the gated discipline ($A_i(t)=W_i(t)-t$) and the
exhaustive discipline are always work conserving. Because of
\autoref{asm:workConservation} one does not need to
consider reflection of the workload process at 0, hence the
workload replacement $\bH_i(x)$ is given by
\[\bH_i(x)=x\be_i+\bA_i(\tau_i(x)).\]
Moreover, $\ee\tau_i(x)\leq \ee T_i(x)<\infty$, because $A_i(t)$
has negative drift. Hence using Wald's identity twice to L\'evy
processes $\bH_i$ and $\bA_i$ we get
\begin{equation}\label{tau_linear}\ee \tau_i(x)=x\ee\tau_i(1).\end{equation}

The following result presents a Kella-Whitt type martingale, which
is a key tool in deriving the workload LST at an arbitrary time.
\begin{prop}
\label{Martingale}
\[
M_i(t):=e^{-\bu\cdot\bA_i (t)}-1+\phi^{\bA}_i(\bu)
\int_0^te^{-\bu\cdot\bA_i(s)}\, ds,\quad t\ge 0,
\]
is a zero mean martingale with respect to filtration $\mathcal
F_i$.
\end{prop}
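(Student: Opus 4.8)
The plan is to recognise $M_i$ as the Kella--Whitt martingale \cite{Kella} in the special case where there is no additional adapted finite-variation term, applied to the scalar process $X(t):=\bu\cdot\bA_i(t)$, and then to give a short self-contained verification. Fix $\bu\ge\0$ (the natural domain of the Laplace exponent). Since $\bA_i$ is an $N$-dimensional L\'evy process, $X$ is a one-dimensional L\'evy process and, by definition of the Laplace exponent, $\ee e^{-X(t)}=e^{-t\phi^{\bA}_i(\bu)}$. Writing $\psi:=\phi^{\bA}_i(\bu)$, the whole claim reduces to showing that
\[
e^{-X(t)}-1+\psi\int_0^t e^{-X(s)}\,\D s
\]
is a zero-mean $\cf_i$-martingale.

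First I would settle integrability. Because each $W_j$ is a subordinator and $A_i$ is \emph{spectrally positive} (positive jumps only), for $\bu\ge\0$ the quantity $e^{-\bu\cdot\bA_i(t)}$ has finite expectation: the only way $e^{-X}$ can be large is through downward movement of $X$, which for a spectrally positive process is driven solely by the finite-variation drift and the continuous Gaussian part, never by jumps. Hence $\psi<\infty$, $\ee e^{-X(t)}<\infty$ for every $t$, and $\ee|M_i(t)|<\infty$; in particular $t\mapsto\ee e^{-X(t)}=e^{-t\psi}$ is continuous, which licenses the Fubini interchange used below.

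The core step is a direct conditional-expectation computation. For $0\le s<t$, stationary and independent increments give
\[
\ee\big[e^{-X(t)}\mid\cf_i(s)\big]=e^{-X(s)}\,e^{-(t-s)\psi},
\]
and, after interchanging $\ee$ and $\int$,
\[
\ee\Big[\int_s^t e^{-X(r)}\,\D r\;\Big|\;\cf_i(s)\Big]=e^{-X(s)}\,\frac{1-e^{-(t-s)\psi}}{\psi}.
\]
Substituting these into $\ee[M_i(t)\mid\cf_i(s)]$ and splitting $\int_0^t=\int_0^s+\int_s^t$, the two contributions carrying the factor $e^{-(t-s)\psi}$ cancel, leaving exactly $M_i(s)$. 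The zero-mean property is immediate, since $\bA_i(0)=\0$ forces $M_i(0)=e^{0}-1+0=0$, whence $\ee M_i(t)=\ee M_i(0)=0$.

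The only genuine subtlety — and the point I would be most careful about — is the integrability claim for $\bu\ge\0$, i.e.\ the finiteness of $\psi$ and of $\ee e^{-X(t)}$; this is precisely where spectral positivity of $A_i$ (and the subordinator nature of the $W_j$) is used, and without it the martingale need not even be well defined. The degenerate case $\psi=0$ is handled by passing to the limit $(1-e^{-(t-s)\psi})/\psi\to t-s$, or equivalently by observing that the computation goes through verbatim with the integral term read as its obvious limit.
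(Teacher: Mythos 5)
Your proposal is correct and takes essentially the same route as the paper, whose entire proof is to apply Theorem 1 of Kella--Whitt to the one-dimensional L\'evy process $\bu\cdot\bA_i$ with respect to the filtration $\mathcal F_i$. Your additional self-contained verification (the conditional-expectation computation using stationary independent increments, and the integrability argument resting on the fact that for $\bu\ge\0$ the scalar process $\bu\cdot\bA_i$ is spectrally positive, so $\ee e^{-\bu\cdot\bA_i(t)}<\infty$) is sound and simply fills in what the paper delegates to the citation.
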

\begin{proof}
Apply \citet[Theorem 1]{Kella} to the
one-dimensional L\'evy process $\bu\cdot\bA_i$ (with respect to
filtration $\mathcal F_i$).
\end{proof}
Applying Doob's Optional Sampling theorem to the martingale $M_i$
from \autoref{Martingale} and stopping time
$\tau_i(x)\wedge n$ yields
\[
\ee\int_0^{\tau_i(x)\wedge n} e^{-\bu\cdot\bA_i(s)}\,ds
=\frac{1}{\phi^{\bA}_i(\bu)}\ee\left(1-e^{-\bu\cdot\bA_i(\tau_i(x)\wedge
n)}\right).
\]
Taking $n\rightarrow\infty$ and applying the monotone convergence
theorem on the left and the dominated convergence theorem on the
right ($e^{-\bu\cdot\bA_i(\tau_i(x)\wedge n)}\leq e^{u_ix}$) we
obtain
\begin{equation}
\label{Doob}
 \ee\int_0^{\tau_i(x)} e^{-\bu\cdot\bA_i(s)}\,ds
 =\frac{1}{\phi^{\bA}_i(\bu)}\ee\left(1-e^{-\bu\cdot\bA_i(\tau_i(x))}\right).
\end{equation}
We are now ready to state the second main result of this paper.
\begin{samepage}
\begin{theorem}
\label{theorem2} Consider the polling system described in 
\autoref{Model description} and suppose that the disciplines at every
queue satisfy \autoref{asm_stoppingTime},
\autoref{asm:workConservation} and 
\autoref{Property2}. Then the LST of the steady-state distribution of
the joint amount of fluid $\bF$ at an arbitrary epoch is given by
\begin{equation}\label{wynik} \ee e^{-\bu\cdot\bF}=
\frac{N(\bu)}{\displaystyle \sum_{i=1}^N \ee S_i+
\sum_{i=1}^N\ee\tau_i(1)\ee B_{i,i}},
\end{equation}
where
\begin{equation}
\label{numerator} N(\bu)=\sum_{i=1}^N \left(
\frac{\tilde\bZ_i(\bu)-\tilde\bE_i(\bu)}{\phi^{\bA}_i(\bu)}+
\frac{\tilde\bE_i(\bu)-\tilde\bZ_{i+1}(\bu)}{\phi(\bu)}\right)
\end{equation}
and $\tilde\bZ_i$, $\tilde\bE_i$ are as given in 
\autoref{Cor}.
\end{theorem}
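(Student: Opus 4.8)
The plan is to obtain the time-stationary transform by a cycle (renewal–reward) argument. Taking the successive visit beginnings to $Q_1$ as cycle boundaries, the time-average transform equals the ratio of the expected per-cycle reward to the expected cycle length, where the cycle is started from the stationary polling-instant law of \autoref{Cor}:
\[
\ee e^{-\bu\cdot\bF}=\frac{\ee\int_0^C e^{-\bu\cdot\bF(t)}\,\D t}{\ee C},
\]
with $C$ the length of one cycle. Within a cycle the server alternates between serving $Q_i$ and switching after $Q_i$, $i=1,\dots,N$, so I would split the numerator as
\[
\ee\int_0^C e^{-\bu\cdot\bF(t)}\,\D t=\sum_{i=1}^N\Big(\ee\!\int_{\text{visit to }Q_i}\!\! e^{-\bu\cdot\bF(t)}\,\D t+\ee\!\int_{\text{switch after }Q_i}\!\! e^{-\bu\cdot\bF(t)}\,\D t\Big),
\]
and match the two families of terms to the two summands of $N(\bu)$.

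For the visit to $Q_i$ I would condition on the workload $\bZ_i$ found at the polling instant. During this visit the workload reads $\bZ_i+\bA_i(t)$ for $0\le t\le\tau_i(B_{i,i})$, and the driving process $\bA_i$ (together with its stopping rule $\tau_i$, an $\mathcal F_i$-stopping time by \autoref{asm_stoppingTime}) is independent of $\bZ_i$. Hence, conditionally on $\bZ_i$, the inner integral is exactly the quantity evaluated in~\eqref{Doob} with starting level $x=B_{i,i}$, so that
\[
\ee\!\int_{\text{visit to }Q_i}\!\! e^{-\bu\cdot\bF(t)}\,\D t=\frac{1}{\phi^{\bA}_i(\bu)}\,\ee\Big[e^{-\bu\cdot\bZ_i}\big(1-e^{-\bu\cdot\bA_i(\tau_i(B_{i,i}))}\big)\Big].
\]
Since $\bZ_i+\bA_i(\tau_i(B_{i,i}))=(\bZ_i-B_{i,i}\be_i)+\bH_i(B_{i,i})$ is precisely the end-of-visit workload $\bE_i$ (using $\bH_i(x)=x\be_i+\bA_i(\tau_i(x))$), the bracket equals $\tilde\bZ_i(\bu)-\tilde\bE_i(\bu)$, reproducing the first summand $(\tilde\bZ_i(\bu)-\tilde\bE_i(\bu))/\phi^{\bA}_i(\bu)$.

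For the switch-over after $Q_i$ the workload is $\bE_i+\bW(t)$ for $0\le t\le S_i$, with $\bE_i$ independent of the pair $(S_i,\bW)$. Here $\bW$ is a subordinator, so by Fubini and independence of $S_i$,
\[
\ee\!\int_0^{S_i}e^{-\bu\cdot\bW(t)}\,\D t=\ee\frac{1-e^{-S_i\phi(\bu)}}{\phi(\bu)}=\frac{1-\tS_i(\phi(\bu))}{\phi(\bu)},
\]
and multiplying by $\tilde\bE_i(\bu)$ together with the identity $\tilde\bZ_{i+1}(\bu)=\tilde\bE_i(\bu)\tS_i(\phi(\bu))$ from~\eqref{Z_i} turns this into the second summand $(\tilde\bE_i(\bu)-\tilde\bZ_{i+1}(\bu))/\phi(\bu)$. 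Summing over $i$ yields $N(\bu)$. For the denominator I would write $C=\sum_i(\tau_i(B_{i,i})+S_i)$, so that $\ee C=\sum_i\ee S_i+\sum_i\ee\tau_i(B_{i,i})$, and invoke the linearity~\eqref{tau_linear}, $\ee\tau_i(x)=x\,\ee\tau_i(1)$, to obtain $\ee\tau_i(B_{i,i})=\ee\tau_i(1)\,\ee B_{i,i}$, which matches the stated denominator.

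The step I expect to be most delicate is the rigorous conditioning in the visit term: one must argue that the service dynamics $\bA_i$ on a given visit are independent of the workload $\bZ_i$ already present, so that the deterministic-level identity~\eqref{Doob} may be applied with the random level $x=B_{i,i}$ and then averaged; this is where \autoref{asm_stoppingTime} and \autoref{asm:workConservation} (ensuring $\tau_i(x)\le T_i(x)$ a.s., so that no reflection at $0$ is needed and $\bH_i(x)=x\be_i+\bA_i(\tau_i(x))$) are essential. A secondary issue is the cycle formula itself, which rests on the finiteness of the mean cycle length $\ee C<\infty$ (guaranteed since each $A_i$ has negative drift) and on starting the cycle from the stationary polling-instant law. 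Finally, the factors $1/\phi^{\bA}_i(\bu)$ and $1/\phi(\bu)$ should be read as removable singularities where these exponents vanish, the corresponding integrals remaining finite there.
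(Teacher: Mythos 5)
Your proposal is correct and follows essentially the same route as the paper: the renewal--reward cycle formula, the split into visit and switch-over periods, conditioning on $\bZ_i$ to apply the optional-stopping identity~\eqref{Doob} for the visit terms, Fubini plus~\eqref{Z_i} for the switch-over terms, and~\eqref{tau_linear} for the denominator. The delicate points you flag (independence of $\bA_i$ from $\bZ_i$, and $\bH_i(x)=x\be_i+\bA_i(\tau_i(x))$ via Assumptions~\ref{asm_stoppingTime} and~\ref{asm:workConservation}) are exactly the ones the paper relies on.
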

\end{samepage}
\begin{proof} Let $\bF(t)$ be the amount of fluid in each queue at time
$t$ within a cycle $C$, assuming that we start in stationarity.
The LST of $\bF$ is calculated by dividing the expected area of
the function $e^{-\bu\cdot\bF(t)}$ over the cycle $C$, by the
expected cycle time $\ee C$:
\[{\displaystyle
\tilde\bF(u)=\frac{\displaystyle \ee\int_0^C
e^{-\bu\cdot\bF(t)}\,dt}{\ee C}.}
\]
Note that
\[
C\de\sum_{i=1}^N \left(S_i+\tau_i(B_{i,i})\right),
\]
from which, in combination with~\eqref{tau_linear}, the
denominator in \eqref{wynik} follows.

An arbitrary cycle of length $C$ can be divided into intervals
corresponding to visit periods $V_i$ to $Q_i$ and switching (idle)
periods $I_i$ between $Q_i$ and $Q_{i+1}$. Conditioning on $S_i$
and using Fubini's theorem, we find
\begin{align*}
\cs_i(\bu)&:=\ee\int_{I_i}e^{-\bu\cdot\bF(t)}\,dt
=\ee\int_0^{S_i}e^{-\bu\cdot(\bE_i+\bW(t))}\,dt
=\tilde\bE_i(\bu)\frac{1-\tS_i(\phi(\bu))}{\phi(\bu)}\\
&=\frac{\tilde\bE_i(\bu)-\tilde\bZ_{i+1}(\bu)}{\phi(\bu)}.
\end{align*}
On the intervals $V_i$ we have
\[
\cl_i(\bu):= \ee\int_{V_i}e^{-\bu\cdot\bF(t)}\,dt
=\ee\int_0^{\tau_i(B_{i,i})}e^{-\bu\cdot(\bZ_i +
\bA_i(t))}\,dt.
\]
Conditioning on $\bZ_i$ and applying \eqref{Doob} yields
\begin{align*}
\cl_i(\bu)& =\frac{1}{\phi^{\bA}_i(\bu)}\left(\ee
e^{-\bu\cdot\bZ_i}-\ee e^{-\bu\cdot(\bZ_i +
\bA_i(\tau_i(B_{i,i}))}\right) \\
&=\frac{1}{\phi^{\bA}_i(\bu)}\left(\ee e^{-\bu\cdot\bZ_i}-\ee
e^{-\bu\cdot\bE_i}\right)=\frac{\tilde\bZ_i(\bu)-\tilde\bE_i(\bu)}
{\phi^{\bA}_i(\bu)}.
\end{align*}
Finally,
\[
\ee\int_0^C e^{-\bu\cdot\bF(t)}\,dt
=\sum_{i=1}^N\left(\cl_i(\bu)+\cs_i(\bu)\right).
\]
\end{proof}
Note that all the quantities appearing in the statement of
\autoref{theorem2} are computable, in the sense that they
follow from the results presented in \autoref{Polling and
Jirina}: the transforms $\tilde\bZ_i(\bu)$ and $\tilde \bE_i(u)$
are given in \autoref{Cor} and $\ee
B_{i,i}=-\partial\tilde\bZ_i/\partial u_i(\0)$.

\section{Varying input processes}
\label{VIP}

In \autoref{Model description} and \autoref{Polling and Jirina} we
considered a polling model, with fixed input $\bW$ characterized
by its Laplace exponent $\phi$. However, to derive our results,
all we needed was the knowledge of $\phi$ and $\phi^{\bA}_i$ for
$i=1,\ldots, N$ as remarked at the end of \autoref{Model
description} and not the fact that $\phi_i^{\bA}$ are related to
each other or that $\phi$ is fixed. That is why we can allow our
input to change between embedded epochs.

More precisely, let $\bW_i$ and $\hat\bW_i$ be sequences of
$N$-dimensional subordinators for $i=1,\ldots,N$. When the server
arrives at $Q_i$ then the input process changes to $\bW_i$, and
when the server leaves $Q_i$ the input process switches to
$\hat\bW_i$. Let us denote the Laplace exponents of these
processes by $\phi_i$ and $\hat\phi_i$, respectively. The process
$\bA_i(t)$ becomes
\[
\bA_i(t)
:=\Big(W_{i,1}(t),\ldots,W_{i,i-1}(t),A_i(t),W_{i,i+1}(t),\ldots,W_{i,N}(t)\Big),
\] where $A_i(t)$ is an arbitrary spectrally positive L\'{e}vy process with negative drift modelling the server's work at $Q_i$.
Again, denote the Laplace exponent of $\bA_i$ by $\phi^{\bA}_i$.

We still consider disciplines satisfying \autoref{Property2},
so that for the gated discipline \eqref{gated} changes to
\[
\eta_i(\bu)=\phi_i(\bu),
\]
and for the exhaustive discipline \eqref{ex} changes to
\[
\eta_i(\bu)=\psi_i(\bu),
\]
with $\psi_i$ such that $\phi_{i,\bu}(\psi_i(\bu))=0$ for
$\phi_{i,u}(\theta)=\phi_i^{\bA}(u_1,\ldots,
u_{i-1},\theta,u_{i+1},\ldots,u_N)$, where $\theta\geq 0$. The
resulting process $\{\bZ^n,n\ge 1\}$ of the joint amount of the
fluid in the different queues at time points $t_n$ also
constitutes an MTJBP with branching mechanism $\bvarphi$ given by
\eqref{recursive_eq} and immigration LST given by
\begin{equation}
\label{immigration2}
\tilde\bG(\bu)=\prod_{i=1}^N\tS_i(\hat\phi_i(u_1,\ldots,u_i,\kappa_{i+1}(\bu),\ldots,\kappa_N(\bu)))
\end{equation}
instead of \eqref{immigration_eq}. The argument given in the proof
of \autoref{theorem1} stays valid. Then \eqref{Z_i} in \autoref{Cor} changes into
\begin{equation}
\label{nZ_i}
\tilde\bZ_{i+1}(\bu)=\tilde\bE_i(\bu)\tS_i(\hat\phi_i(\bu)).
\end{equation}
The statement of \autoref{theorem2} also still holds with
\[
N(\bu)=\sum_{i=1}^N \left(\frac{ \tilde\bZ_i(\bu)-\tilde\bE_i(\bu)
}{\phi^{\bA}_i(\bu)}+\frac{\tilde\bE_i(\bu)-\tilde\bZ_{i+1}(\bu)}{\hat\phi_i(\bu)}
\right).\]
\begin{rem}
For independent compound Poisson input processes, the case of
varying input has been studied in \cite{BoxmaB}. There it is
assumed that the arrival process at $Q_i$, when the server is at
$Q_j$, is a Poisson process with rate $\lambda_{ij}$. Under the
assumption of branching-type service disciplines,  the joint
queue-length distribution at polling instants is derived in
\cite{BoxmaB}.
\end{rem}

\section{Ergodicity}
\label{Ergodicity}

Consider the polling model with varying input, as was presented in \autoref{VIP}. 
The stability condition of such a system is
given in terms of the Perron-Frobenius eigenvalue $\rho_M$ of the
mean matrix $M$ associated to a certain branching process, see
\autoref{thm:branching}. This branching process in turn is
specified in \autoref{theorem1} (or, more precisely, in its
generalization to varying input, as presented in 
\autoref{VIP}) in terms of $\eta_i$ among other quantities. This leads
to a rather non-transparent stability criterion, as opposed to the
`$\rho<1$' type of criteria one usually encounters in queueing
theory. In addition, it is not clear how the criterion  depends on
the disciplines used at different queues. The goal of this section
is to make the stability condition more explicit, and to show that
it is, under quite general circumstances, {\it not} affected by
the service discipline.

In this section we assume that the disciplines at all queues
satisfy \autoref{asm_stoppingTime} and
\autoref{asm:workConservation}. Moreover, we exclude the
degenerate case when $\tau_i(x)=0$ (never serving $Q_i$). In this
setting the stability condition
 can be greatly simplified. Importantly,
 we show that it can be expressed in terms of properties of the {\it rate matrix} $A=(a_{ij})$
 rather than properties of the {\it mean matrix} $M$; here $a_{ij}=\ee A_{i,j}(1)$, that is,
$a_{ij}$ is the rate of the work evolution at $Q_j$ while the
server is at $Q_i$. Note that $A$ has non-negative off-diagonal
elements, hence by Perron-Frobenius theory it has a real
eigenvalue $\rho_A$ which is larger than the real part of any
other eigenvalue of~$A$.

\begin{lem}\label{lem:erg}Let $A$ be irreducible. Then it holds that
\begin{itemize}
  \item if $\rho_A<0$ then $\rho_M<1$ (subcritical);
  \item if $\rho_A=0$ then $\rho_M=1$ (critical);
  \item if $\rho_A>0$ then $\rho_M>1$ (supercritical).
\end{itemize}
In the supercritical case there exists a positive vector $\bw$
such that $M\bw>\bw$.
\end{lem}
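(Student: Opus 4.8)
The plan is to express the branching mean matrix $M$ in terms of the rate matrix $A$, and then to transport the sign of $\rho_A$ onto the sign of $M\bv-\bv$, where $\bv$ is the Perron eigenvector of $A$. First I would compute the mean of the single-queue replacement. By \autoref{asm:workConservation} we have $\bH_i(x)=x\be_i+\bA_i(\tau_i(x))$, and $t_i:=\ee\tau_i(1)$ is finite (since $\ee\tau_i(1)\le\ee T_i(1)<\infty$) and, excluding the degenerate case, strictly positive. Wald's identity together with \eqref{tau_linear} then gives $\ee H_{i,j}(x)=x(\delta_{ij}+t_ia_{ij})$, and differentiating $\ee e^{-\bu\cdot\bH_i(x)}=e^{-x\eta_i(\bu)}$ at $\0$ yields $d_{ij}:=\partial\eta_i/\partial u_j(\0)=\delta_{ij}+t_ia_{ij}$; in matrix form $D=I+TA$ with $T=\mathrm{diag}(t_1,\dots,t_N)$.

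Next I would relate $M$ to $D$ by differentiating the recursion \eqref{recursive_eq} at $\0$. Since $\kappa_l(\0)=0$, the chain rule gives $m_{ij}=d_{ij}\mathbf 1_{\{j\le i\}}+\sum_{l>i}d_{il}m_{lj}$, a system that is solved backward in $i$. Let $\bv>\0$ be a Perron eigenvector of the irreducible Metzler matrix $A$, so that $A\bv=\rho_A\bv$ and hence $(D\bv)_i-v_i=t_i(A\bv)_i=t_i\rho_Av_i$. Applying the recursion to $\bv$ and writing $\delta_i:=(M\bv)_i-v_i$, the coupled equations telescope into the scalar backward recursion
\[
\delta_i=t_i\rho_Av_i+\sum_{l>i}d_{il}\,\delta_l,\qquad i=N,N-1,\dots,1,
\]
where the sum is empty for $i=N$, each coefficient $d_{il}$ with $l>i$ equals $t_ia_{il}\ge 0$, and $t_i,v_i>0$.

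The sign of $\rho_A$ then propagates by backward induction: if $\rho_A>0$ each $\delta_i$ is a strictly positive term plus a nonnegative combination of already-positive $\delta_l$, so $M\bv>\bv$; if $\rho_A=0$ all $\delta_i$ vanish, so $M\bv=\bv$; and if $\rho_A<0$ all $\delta_i<0$, so $M\bv<\bv$. Since $M$ is nonnegative and $\bv>\0$, the Collatz--Wielandt bounds turn these into $\rho_M>1$, $\rho_M=1$ and $\rho_M<1$ respectively, which proves the trichotomy; note that only irreducibility of $A$ (to secure the strictly positive $\bv$) is used, not irreducibility of $M$. The supercritical case moreover supplies the required vector $\bw:=\bv>\0$ with $M\bw>\bw$.

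I expect the main obstacle to be the bookkeeping in the second step: differentiating the nested recursion \eqref{recursive_eq} through the chain rule and verifying that the resulting coupled equations for $(M\bv)_i$ collapse into the clean scalar recursion for $\delta_i$ above. Once that identity is in place, the eigenvalue correspondence and the concluding Collatz--Wielandt estimates are routine.
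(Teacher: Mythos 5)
Your proof is correct, and it reaches the paper's key inequality by a genuinely different computation. Both arguments rest on the same two pillars: the positive Perron eigenvector $\bv$ of the irreducible matrix $A$, and a subinvariance principle (for positive $\bv$, the sign of $M\bv-\bv$ forces the corresponding position of $\rho_M$ relative to $1$; the paper isolates this as \autoref{subinvariance}, you invoke the equivalent Collatz--Wielandt bounds), and both correctly note that only irreducibility of $A$, not of $M$, is required. Where you diverge is in how $M\bv-\bv$ is evaluated. The paper argues probabilistically: it passes to the auxiliary model with zero switch-over times (killing the immigration but keeping the same mean matrix), starts a cycle from $\bZ^0=\be_i$, writes $\bZ^1=\be_i+\sum_{k\ge i}\bA_k(\tau_k(F_k))$, and applies Wald's identity to get the row formula $\bm_i=\be_i+\sum_{k\ge i}\ee\tau_k(1)\,\ee F_k\,\ba_k$; dotting with $\bv$ gives $\bm_i\cdot\bv-v_i=\rho_A\sum_{k\ge i}\ee\tau_k(1)\,\ee F_k\,v_k$, whose sign is immediate from $\ee F_k\ge 0$ and $\ee F_i=1$. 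You instead differentiate the Laplace-exponent recursion \eqref{recursive_eq} at $\0$ (legitimate, since $\kappa_l(\0)=0$) to get $m_{ij}=d_{ij}\mathbf 1_{\{j\le i\}}+\sum_{l>i}d_{il}m_{lj}$ with $D=I+TA$, and then propagate the sign of $\delta_i=(M\bv)_i-v_i$ through the backward recursion $\delta_i=t_i\rho_A v_i+\sum_{l>i}t_i a_{il}\delta_l$; I verified the chain-rule bookkeeping and the telescoping, and they are sound. Your route avoids introducing the zero-immigration auxiliary process and the quantities $\ee F_k$, at the price of an explicit induction; the paper's route absorbs the whole recursion into one probabilistic identity and reads off the sign in a single step. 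Both need $\ee\tau_i(1)\in(0,\infty)$, which you correctly flag via \autoref{asm:workConservation} and the exclusion of the degenerate discipline.
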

\begin{proof}
Let us consider the polling model from \autoref{VIP} and
denote the MTJBP associated to it by $(\hat{\bZ^n})$. This MTJBP
has a corresponding branching mechanism $\bvarphi$ and immigration
$\bG$. Let us construct a new MTJBP $(\bZ^n)$ with the same
branching mechanism $\bvarphi$ but without immigration, i.e., with
$\bG$ set to 0. Such an MTJBP obviously corresponds to a polling
model with the same characteristics as the starting model, but
with switch-over times set to 0. Moreover the mean matrix $\hat M$
is the same as the mean matrix $M$.

From the definition of $M$ its $i$th row is given by $\bm_i=\ee
\left(\bZ^1|\bZ^0=\be_i\right)$. In the following we assume without
loss of generality,
that $\bZ^0=\be_i$. We can write
\[
\bZ^1=\be_i+\sum_{k=i}^N\bA_k\left(\tau_k(F_k)\right),
\]
where $F_k$ denotes the fluid in $Q_k$ upon the server's arrival
to this queue (in the first cycle). Using Wald's identity and the
linearity of $\ee\tau_k(x)$, as given in \eqref{tau_linear}, we
obtain
\[\bm_i=\ee\bZ^1=\be_i+\sum_{k=i}^N\ee\tau_k(1)\ee F_k\ba_k,\]
where $\ba_k$ is the $k$th row of $A$. Let $\bw>\0$ be an
eigenvector of $A$ with positive elements associated to $\rho_A$,
which exists by the Perron-Frobenius theory. Then
\[
\bm_i\cdot\bw=w_i+\rho_A\sum_{k=i}^N\ee\tau_k(1)\ee F_k w_k.
\]
Note that $\ee\tau_k(1)>0$ and $\ee F_k\geq 0$ for all $k$, and
$\ee F_i=1$, because $\bX^0=\be_i$ and $S_k=0$ for all $k$.
Therefore, according to $\rho_A<0,\rho_A=0$, and $\rho_A>0$ we
obtain $M\bw<\bw,M\bw=\bw$, and $M\bw>\bw$, respectively. Now the
claim follows from \autoref{subinvariance} in the Appendix.
\end{proof}

In the following we assume that the total work arriving during the
switch-overs in one polling cycle is not identically 0, that is we
can not erase the switch-over periods without changing the system.
The next corollary is an immediate consequence of
\autoref{lem:erg} and \autoref{thm:branching}, in
conjunction with the comments following the proof of
\autoref{thm:branching}.
\begin{cor}\label{cor:erg}
Let $A$ be irreducible. Then it holds that
\begin{itemize}
  \item if $\rho_A<0$, then the polling system is stable;
  \item if $\rho_A>0$, then the polling system is unstable.
\end{itemize}
\end{cor}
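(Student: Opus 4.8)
The plan is to obtain the corollary almost immediately by combining the eigenvalue dictionary of Lemma \ref{lem:erg} with the convergence/divergence dichotomy of Theorem \ref{thm:branching}. The key observation is that the stability of the polling system is, by definition, equivalent to the existence of a proper limiting workload distribution $\bZ^\infty$ at polling epochs, which is exactly the content of the two bullets of Theorem \ref{thm:branching} applied to the MTJBP $(\hat{\bZ^n})$ identified in Theorem \ref{theorem1} (in its varying-input form from Section \ref{VIP}). So the whole argument is a matter of checking that the hypotheses of Theorem \ref{thm:branching} are met and then translating the $\rho_M$-conditions back into $\rho_A$-conditions via Lemma \ref{lem:erg}.

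First I would dispose of the subcritical case. Assuming $\rho_A<0$, Lemma \ref{lem:erg} gives $\rho_M<1$. To invoke the first bullet of Theorem \ref{thm:branching} I must verify that $\|\bG\|$ is integrable; this holds because $\bG$ is built, via \eqref{immigration2} and the backward-induction structure of \eqref{R_i}, from the finitely many switch-over contributions $\bG_i$, each of which has finite mean since the subordinators $\hat\bW_i$ have finite rates and the $S_i$ have finite expectation (they are positive random variables with the linearity \eqref{tau_linear} controlling the propagation through the $\bR_j$). Granting integrability, Theorem \ref{thm:branching} yields convergence of $\bZ^n$ in distribution to a finite random vector $\bZ^\infty$ with LST given by the infinite product \eqref{eq:br}; this is precisely stability of the embedded chain, and hence of the polling system.

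Next I would treat the supercritical case. Assuming $\rho_A>0$, Lemma \ref{lem:erg} gives $\rho_M>1$ \emph{and}, crucially, the existence of a positive vector $\bw$ with $M\bw>\bw$. This is exactly the weakened hypothesis flagged in the discussion immediately after the proof of Theorem \ref{thm:branching}: rather than requiring $G_i>0$ with positive probability for every $i$, it suffices that $\|\bG\|$ is not identically $0$ together with the existence of such a $\bw$. The non-triviality of $\bG$ is guaranteed by the standing assumption, stated just before the corollary, that the total work arriving during switch-overs in a cycle is not identically $0$. With these two conditions in hand, the second bullet of Theorem \ref{thm:branching} (in its strengthened form) gives $\|\bZ^n\|\to\infty$ almost surely, which is instability.

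The main obstacle I anticipate is not in either bullet individually but in making the reduction airtight: one must be careful that the MTJBP to which Theorem \ref{thm:branching} is applied is genuinely the one carrying the switch-over immigration $\bG$ (the model with $S_i\not\equiv 0$), whereas Lemma \ref{lem:erg} was proved by passing to the \emph{auxiliary} zero-switch-over MTJBP $(\bZ^n)$. The reconciling point is that both processes share the same branching mechanism $\bvarphi$ and hence the same mean matrix $M$, so the eigenvalue conclusions $\rho_M<1$, $\rho_M>1$ and the existence of $\bw$ transfer verbatim to the immigration-bearing process; the immigration only enters through the integrability check and the non-triviality of $\bG$. I would state this identification explicitly. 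The boundary case $\rho_A=0$ (equivalently $\rho_M=1$) is deliberately omitted, consistent with the remark following Theorem \ref{thm:branching} that the critical case lies beyond the scope of the paper.
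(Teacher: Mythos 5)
Your proposal is correct and follows exactly the route the paper takes: the paper derives \autoref{cor:erg} as an immediate consequence of \autoref{lem:erg} and \autoref{thm:branching}, invoking for the supercritical case precisely the weakened hypothesis (non-trivial $\bG$ plus a positive $\bw$ with $M\bw>\bw$) noted in the comments after the proof of \autoref{thm:branching}. Your additional care about the integrability of $\|\bG\|$ and about the auxiliary zero-switch-over MTJBP sharing the same mean matrix $M$ is a faithful elaboration of details the paper leaves implicit.
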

Note that the stability of our polling system depends only on the
input and does not depend on the disciplines used at different
queues. Clearly, this result strongly relies on the fact that the
disciplines are work conserving, see
\autoref{asm:workConservation}, and satisfy
\autoref{Property2}.

Finally we make a comment on a simplified model from
\autoref{Model description}, where the input does not depend
on the location of the server, and in which the server works at
unit speed. That is $A_i=W_i(t)-t$ for a fixed $\bW$. Denote the
mean rate of the input into $Q_i$ by $\rho_i>0$ and the mean total
rate by $\rho=\sum_{i=1}^N \rho_i$. This means that
$a_{ij}=\rho_j-\delta_{ij}$, so that $A$ is irreducible and
$A\1=(\rho-1)\1$. Apply \autoref{subinvariance} in the Appendix
and \autoref{cor:erg} to see that we obtain the expected
stability condition: the system is stable if $\rho<1$ and is
unstable if $\rho>1$.

\section{Discussion and concluding remarks}
\label{DACR}

In this paper we analyzed a general class of L\'evy driven polling
models. Exploiting the relation with multi-type Ji\v{r}ina
processes, we determined the LST of the joint stationary workload
distribution. The collection of results presented in this paper is
rich, as they cover various known results as special cases, but
also constitute a broad range of new results, even for the class
of classical polling models with compound Poisson input.

Various extensions and ramifications can be thought of. For
instance, where the present paper considers cyclic polling (i.e.,
in every cycle all $N$ queues are served in a cyclic manner), any
system with a fixed polling table can be dealt with similarly. The
class of models in which the polling order is {\it random},
however, does {\it not} fit in the class of MTJBPs, as was already
observed for MTBP in \cite{Resing}, and can therefore not be
analyzed by our methods. Another direction for future research
relates to the use of the LST of the workload distribution to
obtain insight into the corresponding tail probabilities, as was
done for a specific polling model in \cite{Deng}.

It would also be interesting to investigate whether a work
decomposition property for the polling model with/without
switch-over times exists, cf. \citet{BoxmaA}. Yet another
direction for future research is to derive the joint steady-state
queue length distribution {\it at an arbitrary epoch} for the
classical polling model with CPP input, using a martingale
technique as we have employed for the workload in
\autoref{Steady-state distribution}.

We envisage that a variant of our approach can deal with an even
larger class of service disciplines than the class of branching
disciplines. As was noted in \autoref{rem:GG} of
\autoref{Polling and Jirina}, the so-called {\it
globally-gated} service discipline does not qualify as branching
type, but we showed that it is nevertheless possible to find a
corresponding MTJBP. The ideas presented in \autoref{rem:GG}
indicate under what circumstances still MTJBPs can be
constructed.

\appendix
\section{}\label{appendix1}
The following Lemma is a variation of~\cite[Theorem 1.6]{Seneta3}.
\begin{lem}\label{subinvariance}
Let $M$ be a square matrix with non-negative off-diagonal
elements. Let $\rho_M$ be its Perron-Frobenius eigenvalue (the
eigenvalue with maximal real part) and $\bw$ be a positive vector.
Then
\begin{align*}
M\bw<\bw\text{ implies }\rho_M<1,\\
M\bw=\bw\text{ implies }\rho_M=1,\\
M\bw>\bw\text{ implies }\rho_M>1.
\end{align*}
\end{lem}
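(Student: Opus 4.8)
The plan is to reduce the general Metzler case (non-negative off-diagonal entries) to the classical non-negative-matrix case by a spectral shift, and then to conclude by pairing the hypothesis with a left Perron eigenvector. First I would fix a constant $c\ge 0$ large enough that $B:=cI+M$ has all entries non-negative; this is possible since only the diagonal entries of $M$ may be negative and they are finite. The eigenvalues of $B$ are exactly $c+\lambda$ as $\lambda$ ranges over the spectrum of $M$. By Perron--Frobenius theory applied to the non-negative matrix $B$, its Perron root $\rho_B$ is a real eigenvalue of maximal modulus; since $\rho_B$ is real and every eigenvalue $\mu$ of $B$ satisfies $\mathrm{Re}\,\mu\le|\mu|\le\rho_B$, it follows that $\rho_B$ is the eigenvalue of $B$ of maximal real part, so that $\rho_B=c+\rho_M$ by the very definition of $\rho_M$. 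Under this shift the three hypotheses become $B\bw<(1+c)\bw$, $B\bw=(1+c)\bw$ and $B\bw>(1+c)\bw$, and the desired conclusions become $\rho_B<1+c$, $\rho_B=1+c$ and $\rho_B>1+c$.

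Next I would invoke the existence (again from Perron--Frobenius theory for non-negative matrices) of a left Perron eigenvector $\bv\ge\0$, $\bv\ne\0$, satisfying $\bv^\top B=\rho_B\,\bv^\top$. Because $\bw>\0$ while $\bv$ is non-negative and non-zero, we have $\bv^\top\bw=\sum_i v_iw_i>0$. Pairing $\bv^\top$ with the strict inequality $B\bw<(1+c)\bw$, for every index $i$ with $v_i>0$ one has $v_i(B\bw)_i<v_i(1+c)w_i$ (using $w_i>0$), while indices with $v_i=0$ contribute zero to both sides; summing, and using that at least one $v_i$ is positive, gives $\rho_B\,\bv^\top\bw=\bv^\top B\bw<(1+c)\,\bv^\top\bw$, whence $\rho_B<1+c$ after dividing by $\bv^\top\bw>0$. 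The reverse strict inequality is handled identically, and in the equality case $\bv^\top B\bw=(1+c)\bv^\top\bw$ yields $\rho_B=1+c$ directly. Undoing the shift via $\rho_B=c+\rho_M$ then gives $\rho_M<1$, $\rho_M>1$ and $\rho_M=1$, respectively.

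The one genuinely delicate point is the identification $\rho_B=c+\rho_M$: one must be certain that the eigenvalue of $B$ singled out by Perron--Frobenius (the one of maximal modulus) is the shift of the eigenvalue of $M$ of maximal real part, which is precisely the content of the remark preceding the lemma, namely that $\rho_M$ is real and dominates the real parts of all other eigenvalues of $M$. I expect this spectral-shift bookkeeping to be the main obstacle, with everything else routine. I would also stress that the pairing argument requires only $\bv\ge\0$, $\bv\ne\0$ rather than a strictly positive left eigenvector, so that \emph{no} irreducibility of $M$ is needed; this matters here, since, as noted in the text, the mean matrix arising from exhaustive service at the last queue need not be irreducible.
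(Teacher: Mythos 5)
Your proof is correct and is essentially the paper's own argument: pair the hypothesis on $\bw$ with a non-negative left eigenvector of $M$ associated with $\rho_M$ and divide by $\bv\cdot\bw>0$. The only difference is that you make explicit, via the shift $B=cI+M$, why such a non-negative left eigenvector exists and why it corresponds to the eigenvalue of maximal real part — details the paper simply asserts.
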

\begin{proof}
Let $\bv\geq \0$ be a left eigenvector (i.e., a row vector) of $M$
corresponding to $\rho_M$. If $M\bw<\bw$, then $\rho_M(\bv \cdot
\bw)=\bv M\bw<\bv\cdot \bw$. Hence $\rho_M<1$. The other two
statements follow similarly.
\end{proof}

\section*{Acknowledgments} The authors thank E.\ Altman (INRIA, France),
S.\ Meyn (University of Illinois at Urbana-Champaign, United
States), and J.A.C.\ Resing (Eindhoven University of Technology,
the Netherlands) for useful discussions.

\small
\bibliography{polling}
\end{document}